\title{$\alpha$-minimal Banach spaces}
\author {Christian Rosendal}
\address{Department of Mathematics, Statistics, and Computer Science (M/C 249)\\
University of Illinois at Chicago\\
851 S. Morgan St.\\
Chicago, IL 60607-7045\\
USA}
\email{rosendal.math@gmail.com}
\urladdr{http://www.math.uic.edu/$_~$rosendal}
\newcommand {\N}{\mathbb N}
\newcommand {\R}{\mathbb R}
\newcommand{\I}{\mathbb I}
\newcommand {\D}{\mathbb D}
\newcommand{\norm}[1]{\lVert#1\rVert}
\newcommand{\om}{\omega}
\newcommand{\con}{\;\hat{}\;}
\newcommand{\tom} {\emptyset}
\newcommand{\saa}{\Rightarrow}
\newcommand{\equi}{\Leftrightarrow}
\newcommand{\til}{\rightarrow}
\newcommand{\Lim}[1]{\mathop{\longrightarrow}\limits_{#1}}
\newcommand {\del}{ \; \big| \;}
\newcommand {\go} {\mathfrak}
\newcommand {\ku} {\mathcal}
\newcommand {\e} {\exists}
\renewcommand {\a} {\forall}
\newtheorem{thm}{Theorem}
\newtheorem{lemme}[thm]{Lemma}
\newtheorem{prop} [thm] {Proposition}
\newtheorem{defi} [thm] {Definition}
\newtheorem{prob}[thm]{Problem}
\begin{document}

\thanks{The initial research for this article was done while the author was visiting V. Ferenczi at the University of S\~ao Paulo, Brazil, with the support of FAPESP.  The author's research was likewise supported by  NSF grants DMS 0901405 and DMS 0919700}

\subjclass[2000]{Primary: 46B03, Secondary 03E15}

\keywords{Ramsey Theory, Infinite games in vector spaces, Isomorphic classification of Banach spaces}

\maketitle

\begin{abstract}A Banach space $\ku W$ with a Schauder basis is said to be {\em $\alpha$-minimal} for some $\alpha<\om_1$ if, for any two block subspaces $\ku Z, \ku Y\subseteq \ku W$, the Bourgain embeddability index of $\ku Z$ into $\ku Y$ is at least $\alpha$.

We prove a dichotomy that characterises when a Banach space has an $\alpha$-minimal subspace, which contributes to the ongoing project, initiated by W. T. Gowers, of classifying separable Banach spaces by identifying characteristic subspaces.
\end{abstract}

\tableofcontents

\section{Introduction}
Suppose $\ku W$ is a separable, infinite-dimensional Banach space. We say that $\ku W$ is {\em minimal} if $\ku W$ isomorphically embeds into any infinite-dimensional subspace $\ku Y\subseteq \ku W$ (and write $\ku W\sqsubseteq \ku Y$ to denote that $\ku W$ embeds into $\ku Y$). The class of Banach spaces without minimal subspaces was studied by V. Ferenczi and the author in \cite{minimal}, extending work of W. T. Gowers \cite{gowers} and A. M. Pelczar \cite{anna}, in which a dichotomy was proved characterising the presence of minimal subspaces in an arbitrary infinite-dimensional Banach space. 

The dichotomy hinges on the notion of {\em tightness}, which we can define as follows. Assume that $\ku W$ has a Schauder basis $(e_n)$ and suppose $\ku Y\subseteq \ku W$ is a subspace. We say that $\ku Y$ is {\em tight} in the basis $(e_n)$ for $\ku W$ if there are successive finite intervals of $\N$,
$$
I_0<I_1<I_2<\ldots\subseteq \N,
$$
such that for any isomorphic embedding $T\colon \ku Y\til \ku W$, if $P_{I_m}$ denotes the canonical projection of $\ku W$ onto $[e_n]_{n\in I_m}$, then 
$$
\liminf_{m\til \infty}\|P_{I_m}T\|>0.
$$
Alternatively, this is equivalent to requiring that whenever $A\subseteq \N$ is infinite, there is no embedding of $\ku Y$ into $[e_n\del n\notin \bigcup_{m\in A}I_m]$.
Also, the basis $(e_n)$ is {\em tight} if any infinite-dimensional subspace $\ku Y\subseteq \ku W$ is tight in $(e_n)$ and a space is {\em tight} in case it has a tight basis. We note that if $\ku W$ is tight, then so is any shrinking basic sequence in $\ku W$.

Tightness is easily seen to be an obstruction to minimality, in the sense that a tight space cannot contain a minimal subspace. In \cite{minimal} the following converse is proved:  any infinite-dimensional Banach space contains either a minimal or a tight subspace.

J. Bourgain introduced in \cite{bourgain} an ordinal index that gives a quantitative measure of how much one Banach space with a basis embeds into another. Namely, suppose $\ku W$ is a space with a  Schauder basis $(e_n)$ and $\ku Y$ is any Banach space. We let $T((e_n),\ku Y,K)$ be the tree of all finite sequences $(y_0,y_1,\ldots,y_k)$ in $\ku Y$, including the empty sequence $\tom=(\;)$, such that
$$
(y_0,\ldots,y_k)\sim_K(e_0,\ldots,e_k).
$$
Here, whenever $(x_i)$ and $(y_i)$ are sequences of the same (finite or infinite) length in Banach spaces $\ku X$ and $\ku Y$, we write 
$$
(x_i)\sim_K(y_i)
$$
if for all $a_{0},\ldots,a_{k}\in \R$
$$
\frac 1K\Big\|\sum_{i=0}^ka_{i}x_{i}\Big\|\leqslant \Big\|\sum_{i=0}^ka_{i}y_{i}\Big\|\leqslant K\Big\|\sum_{i=0}^ka_{i}x_{i}\Big\|.
$$
We notice that $T((e_n),\ku Y,K)$ is {\em ill-founded}, i.e., admits an infinite branch, if and only if  $\ku W=[e_n]$ embeds with constant $K$ into $\ku Y$. 

The {\em rank function} $\rho_T$ on a {\em well-founded} tree $T$, i.e., without infinite branches, is defined by $\rho_T(s)=0$ if $s\in T$ is a terminal node and 
$$
\rho_T(s)=\sup\big\{\rho_T(t)+1\del s\prec t, \; t\in T\big\}
$$
otherwise. Then, the {\em rank} of $T$ is defined by
$$
{\rm rank}(T)=\sup\big\{\rho_T(s)+1\del s\in T\big\},
$$ 
whence ${\rm rank}(T)=\rho_T(\tom)+1$ if $T$ is non-empty.
Moreover, if $T$ is ill-founded, we let ${\rm rank}(T)=\infty$, with the stipulation that $\alpha<\infty$ for all ordinals $\alpha$.

Then, ${\rm rank}\big(T((e_n),\ku Y,K)\big)$ measures the extent  to which $\ku W=[e_n]$ $K$-embeds into $\ku Y$ and we therefore define the {\em embeddability rank} of $\ku W=[e_n]$ into 
$\ku Y$ by
$$
{\rm Emb}((e_n),\ku Y)=\sup_{K\geqslant 1}{\rm rank}\big(T((e_n),\ku Y,K)\big).
$$
Since $(e_n)$ is a basic sequence, there is for any $K\geqslant 1$ a sequence $\Delta=(\delta_n)$ of positive real numbers, such that if $y_n,z_n\in \ku Y$, $\norm{y_n-z_n}<\delta_n$ and $(y_0,\ldots,y_k)\sim_K(e_0,\ldots,e_k)$, then also $(z_0,\ldots,z_k)\sim_{K+1}(e_0,\ldots,e_k)$. Therefore, to calculate the embeddability rank, ${\rm Emb}((e_n),\ku Y)$, it suffices to consider the trees of all finite sequences $(y_0,\ldots,y_k)$ with $(y_0,\ldots,y_k)\sim_K(e_0,\ldots,e_k)$, where, moreover, we require the $y_n$ to belong to some fixed dense subset of $\ku Y$. We shall use this repeatedly later on, where we replace $\ku Y$ by a dense subset of itself. 
This comment also implies that ${\rm Emb}((e_n),\ku Y)$ is either $\infty$, if $\ku W\sqsubseteq \ku Y$, or an ordinal $<{\rm density}(\ku Y)^+$, if $\ku W\not\sqsubseteq \ku Y$. In particular, if $\ku Y$ is separable, then ${\rm Emb}((e_n),\ku Y)$ is either $\infty$ or a countable ordinal. 
Also, note that the embeddability rank depends not only on the space $\ku W$, but also on the basis $(e_n)$. However, if $\ku Y$ is separable and $\ku W\not\sqsubseteq\ku Y$, then by the Boundedness Theorem for coanalytic ranks (see \cite{kechris}), the supremum of ${\rm Emb}((e_n),\ku Y)$ over all bases $(e_n)$ for $\ku W$ is a countable ordinal. 
In case ${\rm Emb}((e_n),\ku Y)\geqslant \alpha$, we say that $\ku W=[e_n]$ {\em $\alpha$-embeds} into $\ku Y$.

Since minimality is explicitly expressed in terms of embeddability, it is natural to combine it with Bourgain's embeddability index in the following way. 
\begin{defi}Let $\alpha$ be a countable ordinal.
A Banach space $\ku W$ with a Schauder basis $(e_n)$ is {\em $\alpha$-minimal} if any block subspace $\ku Z=[z_n]\subseteq \ku W$ $\alpha$-embeds into any infinite-dimensional subspace $\ku Y\subseteq \ku W$.
\end{defi}
It is easy to check that if $\ku W=[e_n]$ is a space with a basis and $\ku X=[x_n]$ and $\ku Y=[y_n]$ are block subspaces of $\ku W$ such that $x_n\in \ku Y$ for all but finitely many $n$, which we denote by $\ku X\subseteq^*\ku Y$,  then if $\ku Y$ is $\alpha$-minimal, so is $\ku X$. In particular, $\alpha$-minimality is preserved by passing to block subspaces.

Similarly, we can combine tightness with the embeddability index.
\begin{defi}
Let $\alpha$ be a countable ordinal and $\ku W$ a Banach space with a Schauder basis $(e_n)$. We say that $\ku W=[e_n]$ is $\alpha$-tight if for any block basis $(y_n)$ in $\ku W$ there is a sequence of intervals of $\N$,
$$
I_0<I_1<I_2<\ldots\subseteq \N
$$
such that for any infinite set $A\subseteq \N$,
$$
{\rm Emb}\big((y_n),[e_n\del n\notin \bigcup_{j\in A}I_j]\big)\leqslant\alpha.
$$
In other words, if $\ku Y=[y_n]$ $(\alpha+1)$-embeds into some subspace $\ku Z\subseteq \ku W$, then 
$$
\liminf_{k\til \infty}\|P_{I_k}|_{\ku Z}\|>0.
$$
\end{defi}
Again, it is easy to see that if $\ku W=[e_n]$ is $\alpha$-tight, then so is any block subspace of $\ku W$.
Also, if $\ku W=[e_n]$ is $\alpha$-tight, then no block subspace, $\ku Y=[y_n]$,  is $\beta$-minimal for $\alpha<\beta$. And, if $\ku Y=[y_n]$ is minimal, then $\ku Y=[y_n]$ is $\alpha$-minimal for any $\alpha<\om_1$. It follows from this that if $\ku W=[e_n]$ is $\alpha$-tight, then $\ku W=[e_n]$ admits no minimal block subspaces, and thus, as any infinite-dimensional subspace contains a block subspace up to a small perturbation, $\ku W$ contains no minimal subspaces either.

Our first result says that tightness can be reinforced to $\alpha$-tightness.
\begin{thm}\label{tightness}
Let $\ku W$ be a Banach space with a Schauder basis  and having no minimal subspaces. Then there is a block subspace $\ku X=[x_n]$ that is  $\alpha$-tight for some countable ordinal $\alpha$.
\end{thm}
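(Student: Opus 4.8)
The plan is to first reduce to the case of a tight basis, then to fold the universal quantifier over infinite sets $A$ into a single well-founded tree, and finally to recast the uniform bound required by $\alpha$-tightness as the boundedness of a coanalytic rank over the Polish space of block bases. Since $\ku W$ contains no minimal subspace, no subspace of $\ku W$ is minimal (minimality being intrinsic), so the dichotomy of \cite{minimal} forces $\ku W$ to contain a tight subspace; after passing to a block subspace and a small perturbation I may assume that the basis $(e_n)$ is itself tight. Thus every block basis $(y_n)$ is tight: there are intervals $I_0<I_1<\ldots$ so that $[y_n]$ does not embed into $Z_A:=[e_n\del n\notin\bigcup_{m\in A}I_m]$ for any infinite $A\subseteq\N$.

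The next step disposes of the quantifier over $A$. Given $(y_n)$, intervals $(I_m)$ and $K\geqslant 1$, let $\tilde T((y_n),(I_m),K)$ be the tree of finite sequences $((z_0,i_0),\ldots,(z_k,i_k))$, with the $z_j$ in a fixed countable dense set of finitely supported vectors, with $i_0<i_1<\ldots<i_k$ in $\N$, and subject to $(z_0,\ldots,z_k)\sim_K(y_0,\ldots,y_k)$ and ${\rm supp}(z_l)\cap I_{i_j}=\emptyset$ for all $j,l\leqslant k$. An infinite branch produces a $K$-embedding of $[y_n]$ into $Z_A$ with $A=\{i_j\del j\}$ infinite, and conversely; hence $(I_m)$ witnesses tightness of $[y_n]$ exactly when every $\tilde T((y_n),(I_m),K)$, $K\in\N$, is well-founded, in which case $\tilde r((y_n),(I_m)):=\sup_K{\rm rank}(\tilde T((y_n),(I_m),K))$ is a countable ordinal. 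Mapping a node of $T((y_n),Z_A,K)$ to the node of $\tilde T((y_n),(I_m),K)$ obtained by appending reserved indices from the infinite set $A$ is order preserving, so ${\rm Emb}((y_n),Z_A)\leqslant\tilde r((y_n),(I_m))$ for every infinite $A$. Consequently, to prove that a block subspace $\ku X$ is $\alpha$-tight it suffices to bound $\tilde r((y_n),(I_m))$, for one good choice of intervals per block basis, uniformly over all block bases of $\ku X$ by a fixed countable $\alpha$.

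The uniform bound I would obtain from coanalytic boundedness (see \cite{kechris}). Suppose the intervals are pinned down as a Borel function $(y_n)\mapsto(I_m((y_n)))$ on the Polish space ${\rm bb}(\ku X)$ of normalized block bases of $\ku X$. Then $(y_n)\mapsto\tilde T((y_n),(I_m),K)$ is Borel for each $K$, the set of $(y_n)$ for which all these trees are well-founded is $\mathbf\Pi^1_1$, and $(y_n)\mapsto\tilde r((y_n),(I_m))$ is a $\mathbf\Pi^1_1$-rank on it. By tightness this set is all of ${\rm bb}(\ku X)$, which is Polish and in particular analytic, so the Boundedness Theorem furnishes a single countable $\alpha$ with $\tilde r((y_n),(I_m))\leqslant\alpha$ for every block basis $(y_n)$ of $\ku X$; by the domination above, ${\rm Emb}((y_n),Z_A)\leqslant\alpha$ for all infinite $A$, which is precisely $\alpha$-tightness of $\ku X$.

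The crux is therefore to arrange that good tightness intervals can be selected in a Borel, indeed canonical, way, since the dichotomy of \cite{minimal} a priori only asserts their existence for each fixed block basis. This is the step I expect to be the main obstacle: the $\mathbf\Pi^1_1$ set of good pairs $((y_n),(I_m))$ already projects onto ${\rm bb}(\ku X)$, but a coanalytic set with full projection need not contain an analytic subset with full projection, so a bare uniformization (e.g.\ by Kond\^o's theorem) raises the complexity beyond the reach of boundedness. I anticipate resolving this by invoking the finer analysis of tightness from \cite{minimal}: after passing to a further block subspace one may assume the space is tight in a canonical form — for instance tight by range, where the intervals witnessing tightness of $[y_n]$ can be taken to be the ranges of the $y_n$ themselves — so that $(y_n)\mapsto(I_m((y_n)))$ is continuous and the scheme above applies. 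Once such a canonical choice is secured and the well-foundedness of the combined tree $\tilde T$ for it is verified, the remaining descriptive set theory and tree manipulations are routine.
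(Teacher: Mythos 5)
Your proposal follows essentially the same route as the paper: the crux you single out---a canonical, Borel selection of the tightness intervals---is exactly what the paper imports from \cite{minimal}, namely that a space with no minimal subspace contains a \emph{continuously tight} block subspace, i.e.\ a continuous $f\colon bb(x_n)\til \I^\N$ choosing the intervals (your fallback of tightness by range would not always be available, e.g.\ Tsirelson's space is tight yet subsequentially minimal), after which the Boundedness Theorem yields the uniform countable bound just as you describe. The only cosmetic difference is that the paper applies boundedness to the analytic image of the Borel map $((y_n),A,K)\mapsto T\big((y_n),[x_n\del n\notin \bigcup_{j\in A}I_j],K\big)$, parametrizing over the infinite sets $A$ directly instead of amalgamating that quantifier into your combined tree $\tilde T$; the two devices are interchangeable.
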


Our main results, however, provides us with more detailed structural information.
\begin{thm}\label{main5}
Let $\ku W$ be Banach space with a Schauder basis and suppose $\alpha<\om_1$. 
Then there is a block subspace  $\ku X=[x_n]\subseteq \ku W$ that is either $\om\alpha$-tight or $(\alpha+1)$-minimal.
\end{thm}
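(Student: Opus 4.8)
The plan is to realise the dichotomy as the two alternatives of a determinacy theorem for block sequences, applied to sets cut out by the Bourgain embeddability rank. Two preliminary reductions make this legitimate. First, both target properties are inherited by block subspaces (for $\alpha$-minimality this is the remark after its definition; for $\alpha$-tightness it is the remark after the definition of $\alpha$-tight), and both are insensitive to finite modifications $\ku X \subseteq^* \ku Y$ of the basis; we are therefore free to thin $\ku W$ repeatedly and need only produce a single block subspace carrying one of the two properties. Moreover, since every infinite-dimensional subspace contains a block subspace up to arbitrarily small perturbation, all quantifiers over infinite-dimensional subspaces may be replaced by quantifiers over block subspaces. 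Second, for each fixed $\beta<\om_1$ the relation ${\rm Emb}((y_n),\ku Y)\geqslant\beta$ is Borel in the pair $((y_n),\ku Y)$: membership of a finite sequence in $T((y_n),\ku Y,K)$ is a closed condition through $\sim_K$, the property ${\rm rank}\geqslant\beta$ is Borel for fixed countable $\beta$, and ${\rm Emb}$ is a countable supremum over $K$; here the reduction to a fixed dense subset noted in the introduction keeps everything on a standard Borel space of block sequences. Consequently the sets ``$(y_n)$ $(\alpha+1)$-embeds into $\ku Y$'' are Borel, and Gowers' Ramsey/determinacy theorem for analytic sets of block sequences applies.

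The combinatorial core is a tree-rank estimate that produces the factor $\om$. Fix a block sequence $(y_n)$ and a subspace $\ku Z=[e_n\del n\notin\bigcup_{j\in A}I_j]$ with $A$ infinite; the support set $\N\setminus\bigcup_{j\in A}I_j$ breaks into $\om$ successive gaps. Reading a $K$-embedding of $(y_n)$ into $\ku Z$ as a branch of $T((y_n),\ku Z,K)$ and decomposing the tree according to how the branch allocates the supports of its vectors to these gaps in increasing order, I would bound ${\rm rank}\big(T((y_n),\ku Z,K)\big)$, and hence ${\rm Emb}((y_n),\ku Z)$, by an ordinal of the form $\om\cdot\gamma$, where $\gamma$ is the supremum over the gaps of the embeddability rank of $(y_n)$ into the individual gap. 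Since left multiplication by $\om$ dominates the naive ``concatenation'' bound $\gamma\cdot\om$, the estimate $\om\cdot\gamma$ is safe; the point is that if every gap admits only rank-$\leqslant\alpha$ embeddings of $(y_n)$, then ${\rm Emb}((y_n),\ku Z)\leqslant\om\alpha$, which is exactly the inequality required for $\om\alpha$-tightness. The genuine difficulty here is not the ordinal bookkeeping in isolation but controlling the degradation of the equivalence constant $K$ as an embedding crosses successive gaps, so that a single countable bound survives the supremum over $K$.

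With the estimate in hand, the task becomes to secure, on one block subspace $\ku X$ and for \emph{every} block sequence of $\ku X$ simultaneously, a system of intervals whose gaps each carry only rank-$\leqslant\alpha$ embeddings. This is where the absence of $(\alpha+1)$-minimality enters and where the main obstacle lies. Applying Gowers' theorem to the Borel set of pairs witnessing ``$(\alpha+1)$-embeds'', one of two alternatives holds on a block subspace $\ku X$. In the strategic alternative the winning strategy in the embedding game unfolds — by the standard passage from a strategy to an honest embedding, together with the fact that the condition ${\rm rank}\geqslant\alpha+1$ is preserved under the small perturbations absorbed by the game — into the statement that every block subspace of $\ku X$ $(\alpha+1)$-embeds into every infinite-dimensional subspace of $\ku X$; that is, $\ku X$ is $(\alpha+1)$-minimal, and we are done. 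In the complementary avoidance alternative, no block subspace admits rank-$\geqslant(\alpha+1)$ embeddings of the generic block sequence into the relevant gaps, and a fusion (diagonal) construction — legitimate because all the ranks in play are countable, so the successive minimisations terminate — converts this into the required gap-wise rank-$\leqslant\alpha$ bounds uniformly over all block sequences of a suitably thinned $\ku X$.

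Combining the two previous paragraphs finishes the argument in the avoidance case: for every block sequence $(y_n)$ of $\ku X$ the fusion supplies intervals $I_0<I_1<\ldots$ whose gaps carry only rank-$\leqslant\alpha$ embeddings of $(y_n)$, and the tree-rank estimate then gives ${\rm Emb}\big((y_n),[e_n\del n\notin\bigcup_{j\in A}I_j]\big)\leqslant\om\alpha$ for every infinite $A$, which is precisely the defining inequality for $\ku X$ to be $\om\alpha$-tight. The step I expect to be hardest is the passage in the third paragraph from the pointwise information furnished by the failure of $(\alpha+1)$-minimality, namely a low-rank embedding existing below each subspace, to the uniform, gap-indexed rank bounds needed for tightness; it is the coordination of Gowers' determinacy with the ordinal rank arithmetic, rather than either ingredient on its own, that carries the weight of the theorem.
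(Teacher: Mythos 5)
Your outline reproduces the broad shape of the dichotomy, but the two load-bearing steps are not the ones the paper uses, and as stated both of them fail.

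The ``tree-rank estimate'' in your second paragraph cannot be right. The intervals $I_0<I_1<\ldots$ are finite and $A$ is infinite, so the ``gaps'' of $\N\setminus\bigcup_{j\in A}I_j$ between consecutive removed intervals are \emph{finite} sets of coordinates; the embeddability rank of an infinite block sequence $(y_n)$ into a finite-dimensional piece is finite (you cannot place $k+1$ linearly independent vectors $K$-equivalent to $(y_0,\ldots,y_k)$ in a space of dimension $\leqslant k$). Hence your $\gamma$ is always finite and the proposed bound $\om\cdot\gamma$ would be at most $\om^2$ for \emph{every} space and \emph{every} system of intervals, which would make every space with a basis $\om^2$-tight --- false, since minimal spaces exist. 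The underlying issue is that an embedding into $[e_n\del n\notin\bigcup_{j\in A}I_j]$ uses vectors whose supports straddle many gaps, so no gap-by-gap decomposition of $T((y_n),\ku Z,K)$ controls its rank. (Your ordinal-arithmetic justification is also wrong: $\om\cdot\gamma$ does not dominate $\gamma\cdot\om$; e.g.\ $\om\cdot\om^\om=\om^\om<\om^{\om+1}=\om^\om\cdot\om$.) In the paper the factor $\om$ has a completely different source: the games carry an ordinal clock (player I plays a strictly decreasing sequence of ordinals below the index), a strategy in the asymptotic game $F^{\om\alpha}_X$ lets II simulate a finite block of moves, with clock values $\om\xi+m-1>\cdots>\om\xi$, for each single move of an $\alpha$-clocked game (Lemmas \ref{going to adversarial} and \ref{passing to block sequences}), and on the tight side the intervals are manufactured directly from I's winning strategy in $F^{\om\alpha}_X$, the rank bound ${\rm rank}\big(T((y_n),Z,K)\big)\leqslant\om\alpha$ coming from a monotone map of the embedding tree into positions consistent with that strategy, whose decreasing clock bounds the rank (Lemmas \ref{tight} and \ref{tight1}).

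Relatedly, the determinacy you invoke is not the right tool. Gowers' theorem for analytic sets produces a weakly winning strategy in an infinite game and carries no ordinal-quantitative information; the paper instead proves an adversarial dichotomy for the ordinal-clocked games (Theorem \ref{relational}), by combinatorial forcing, and uses ordinary determinacy of the clocked asymptotic games, which terminate in finitely many moves and hence are determined for arbitrary payoff sets. Your ``strategic alternative'' also elides two necessary steps: a single constant $K$ must be made uniform over all block subspaces (Lemma \ref{uniformity}), and a strategy producing copies of \emph{one} block sequence $(y_n)$ must be converted into strategies producing copies of \emph{every} further block sequence $(z_n)\subseteq[y_n]$ and localized into an arbitrary subspace $Y$ via the Gowers $\alpha$-game, where I plays the ranks of nodes of $T((z_n),Y,K)$ as his clock (Lemmas \ref{passing to block sequences}, \ref{gowers game}, \ref{loose}). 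Without these, the failure of alternative (1) only yields, for each subspace separately, some block sequence and some constant, which is far from $(\alpha+1)$-minimality.
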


Finally, combining Theorems \ref{tightness} and \ref{main5}, we have the following refinement of Theorem \ref{tightness}.
\begin{thm}\label{principal}
Let $\ku W$ be a Banach space with a Schauder basis. Then $\ku W$ has a minimal subspace or a block subspace $\ku X=[x_n]\subseteq \ku W$ that  is $\alpha$-minimal and $\om\alpha$-tight for some countable ordinal $\alpha$.
\end{thm}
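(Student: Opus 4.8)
The plan is to run a least-ordinal argument and then feed the chosen ordinal into Theorem \ref{main5}. If $\ku W$ has a minimal subspace we are in the first alternative, so I assume it does not and apply Theorem \ref{tightness} to pass to a block subspace $\ku X_0=[x_n]$ that is $\gamma$-tight for some countable $\gamma$. Since $\alpha$-tightness passes to block subspaces and, directly from the definition, is preserved when the ordinal parameter is increased, every block subspace of $\ku X_0$ is $\om\gamma$-tight. Thus the set of $\beta<\om_1$ for which some block subspace of $\ku X_0$ is $\om\beta$-tight is non-empty, and I let $\alpha$ be its least element; note $\alpha\geqslant 1$, since $0$-tightness is impossible (the relevant embeddability rank is always at least $1$). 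I then fix an $\om\alpha$-tight block subspace $\ku X\subseteq\ku X_0$. The whole point of this choice is the following consequence of the minimality of $\alpha$: no block subspace of $\ku X$ is $\om\beta$-tight for any $\beta<\alpha$, which is exactly what is needed to discard the tight alternative of Theorem \ref{main5}.

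Since $\om\alpha$-tightness is inherited by every block subspace of $\ku X$, it now suffices to find a block subspace of $\ku X$ that is $\alpha$-minimal. When $\alpha=\beta_0+1$ is a successor this is a single application of Theorem \ref{main5}: applied to $\ku X$ with parameter $\beta_0$ it returns a block subspace that is either $\om\beta_0$-tight or $(\beta_0+1)$-minimal, and the first option is excluded because $\beta_0<\alpha$; hence the resulting space is $\alpha$-minimal and $\om\alpha$-tight, as wanted.

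The hard part will be the limit case, where a single space is $\alpha$-minimal precisely when it is $(\beta+1)$-minimal for every $\beta<\alpha$, whereas Theorem \ref{main5} only manufactures, for one $\beta<\alpha$ at a time, some block subspace that is $(\beta+1)$-minimal. I would fix $\beta_0<\beta_1<\ldots$ with $\sup_n\beta_n=\alpha$ and recursively build a decreasing chain $\ku X\supseteq\ku Y_0\supseteq\ku Y_1\supseteq\ldots$ of block subspaces, taking $\ku Y_n$ to be the output of Theorem \ref{main5} applied to $\ku Y_{n-1}$ with parameter $\beta_n$; as before the $\om\beta_n$-tight alternative is impossible, so each $\ku Y_n$ is $(\beta_n+1)$-minimal. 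A diagonal block sequence $(u_k)$ with $u_k\in\ku Y_k$ then spans a block subspace $\ku Y_\infty$ satisfying $\ku Y_\infty\subseteq^*\ku Y_n$ for all $n$; since $(\beta_n+1)$-minimality is inherited by $\subseteq^*$-subspaces, $\ku Y_\infty$ is $(\beta_n+1)$-minimal for every $n$, hence $(\beta+1)$-minimal for every $\beta<\alpha$, hence $\alpha$-minimal, and it is $\om\alpha$-tight as a block subspace of $\ku X$. The only routine points left to verify are that a block subspace of a block subspace is again a block subspace, so that the construction stays inside the class and the inheritance lemmas of the introduction apply, and that $\sup_n(\beta_n+1)=\alpha$ genuinely upgrades the countably many successor-minimality conditions to full $\alpha$-minimality.
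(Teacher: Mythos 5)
Your argument is correct, but it is organised dually to the paper's. The paper defines the pivotal ordinal on the \emph{minimality} side: after passing to a tight block subspace $\ku W_0$ via Theorem \ref{tightness}, it takes $\alpha$ to be the supremum of all $\gamma$ such that $\ku W_0$ is saturated with $\gamma$-minimal block subspaces, picks $\ku W_1\subseteq\ku W_0$ containing no $(\alpha+1)$-minimal subspace, builds an $\alpha$-minimal $\ku W_\infty\subseteq\ku W_1$ (by exactly the same $\subseteq^*$-diagonalisation over $\gamma_n\nearrow\alpha$ that you use in your limit case), and only then applies Theorem \ref{main5}, once, with parameter $\alpha$: the $(\alpha+1)$-minimal alternative is impossible inside $\ku W_\infty$, so the theorem delivers the $\om\alpha$-tight subspace, with $\alpha$-minimality surviving by block-heredity. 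You define $\alpha$ on the \emph{tightness} side, as the least $\beta$ admitting an $\om\beta$-tight block subspace, and then run Theorem \ref{main5} with parameters $\beta<\alpha$ so that it is the tight alternative that is excluded and the theorem outputs the $(\beta+1)$-minimal pieces, which you diagonalise; $\om\alpha$-tightness survives by block-heredity. Both routes need the same inheritance facts and the observation that ${\rm Emb}\geqslant\beta+1$ for all $\beta<\alpha$ forces ${\rm Emb}\geqslant\alpha$; yours additionally uses the (immediate, and correctly noted) monotonicity of $\beta$-tightness in $\beta$ to see that your set of candidate ordinals is non-empty. The trade-off is minor: the paper invokes Theorem \ref{main5} only once, whereas your limit case invokes it countably often; in exchange, taking $\alpha$ as a minimum spares you the paper's appeal to saturation and to attainment of the supremum in the successor case. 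Note also that the two constructions need not produce the same ordinal $\alpha$, but each is a legitimate witness for the statement.
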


\begin{proof}
Suppose that $\ku W$ has no minimal subspace and pick by Theorem \ref{tightness} some block subspace $\ku W_0\subseteq \ku W$ that is $\beta$-tight for some $\beta<\om_1$. So no block subspace of $\ku W_0$ is $(\beta+1)$-minimal. Let now $\alpha$ be the supremum of all ordinals $\gamma$ such that $\ku W_0$ is saturated with $\gamma$-minimal block subspaces and pick a block subspace $\ku W_1\subseteq \ku W_0$ not containing any $(\alpha+1)$-minimal subspace. 

We claim that $\ku W_1$ contains a $\alpha$-minimal block subspace $\ku W_\infty$. If $\alpha$ is a successor ordinal, this is obvious, so suppose instead that $\alpha$ is a limit. Then we can find ordinals $\gamma_2<\gamma_3< \ldots$ with supremum $\alpha$. We then inductively choose block subspaces $\ku W_1\supseteq \ku W_2\supseteq \ku W_3\supseteq \ldots$ such that $\ku W_n$ is $\gamma_n$-minimal. Letting $\ku W_\infty\subseteq \ku W_1$ be a block subspace such that $\ku W_\infty\subseteq ^*\ku W_n$ for all $n$, we see that $\ku W_\infty$ is $\gamma_n$-minimal for all $n$, which means that for any block sequence $(z_m)\subseteq \ku W_\infty$ and infinite-dimensional subspace $\ku Y\subseteq \ku W_\infty$, we have 
$$
{\rm Emb}\big((z_m),\ku Y\big)\geqslant \gamma_n
$$
for all $n$, whence ${\rm Emb}\big((z_m),\ku Y\big)\geqslant \sup_{n}\gamma_n=\alpha$. So $\ku W_\infty$ is $\alpha$-minimal and so are its subspaces.

Now, $\ku W_\infty$ has no $(\alpha+1)$-minimal subspace, so, by Theorem \ref{main5}, $\ku W_\infty$ contains an $\om\alpha$-tight block subspace $\ku X$, which simultaneously is $\alpha$-minimal.
\end{proof}

Since any two Banach spaces of the same finite dimension are isomorphic, one easily sees that any space $\ku W$ with a Schauder basis $(e_n)$ is $\om$-minimal. On the other hand, in \cite{minimal}, a space $\ku W=[e_n]$ is defined to be {\em tight with constants} if for any block subspace $\ku Y=[y_n]$ there are intervals $I_0<I_1<I_2<\ldots$ such that for any integer constant $K$, 
$$
[y_n]_{n\in I_K}\not\sqsubseteq_K[e_n]_{n\notin I_K}.
$$
In this case, it follows that for any infinite set $A\subseteq \N$ and any $K\in A$, 
$$
{\rm rank}\big(T((y_n),[e_n\del n\notin \bigcup_{j\in A}I_j], K)\big)\leqslant \max I_K,
$$
and hence
$$
{\rm Emb}((y_n),[e_n\del n\notin \bigcup_{j\in A}I_j])=\sup_{K\in A}{\rm rank}\big(T((y_n),[e_n\del n\notin \bigcup_{j\in A}I_j], K)\big)\leqslant \om.
$$
So, if $\ku W=[e_n]$ is tight with constants, we see that $\ku W=[e_n]$ is $\om$-tight and $\om$-minimal.

Following \cite{minimal}, we also define a space $\ku W$ to be {\em locally minimal} if there is a constant $K\geqslant 1$ such that $\ku W$ is $K$-crudely finitely representable in any infinite-dimensional subspace, i.e., if for any finite-dimensional $\ku F\subseteq \ku W$ and infinite-dimensional $\ku Y\subseteq\ku  W$, $\ku F\sqsubseteq_K\ku Y$. Let us first see local minimality in terms of $\alpha$-minimality.
\begin{prop}
Suppose $\ku W$ is a locally minimal Banach space with a Schauder basis $(e_n)$. Then $\ku W=[e_n]$ is $\om^2$-minimal.
\end{prop}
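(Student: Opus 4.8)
The plan is to prove the stronger assertion that for any block subspace $\ku Z=[z_n]\subseteq\ku W$ and any infinite-dimensional $\ku Y\subseteq \ku W$ one has ${\rm Emb}((z_n),\ku Y)\geq\om^2$. Since ${\rm Emb}$ can only decrease when $\ku Y$ is replaced by a subspace, I first pass to a normalised basic sequence $(u_j)$ in $\ku Y$ and assume $\ku Y=[u_j]$ has a basis with basis constant $C$; write $\ku Y_N=[u_j\del j\geq N]$ for its tails and let $C_z$ be the basis constant of $(z_n)$. By the perturbation remark preceding the definition of $\alpha$-minimality, I may also assume throughout that all approximating vectors are finitely supported block vectors of $(u_j)$, at the cost of an arbitrarily small increase of the constants. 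Let $K$ denote the local minimality constant.

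The combinatorial heart is an amalgamation estimate, whose verification I expect to be the main obstacle. Suppose $(v_0,\ldots,v_{a-1})\sim_{K_1}(z_0,\ldots,z_{a-1})$ with the $v_i$ supported on $[u_0,\ldots,u_{N-1}]$, and $(v_a,\ldots,v_{b-1})\sim_{K_2}(z_a,\ldots,z_{b-1})$ with the $v_i$ supported on $[u_N,u_{N+1},\ldots]$. Then, splitting an arbitrary combination $\sum_{i<b}\lambda_iv_i$ according to these two disjoint supports and using the basis projections of $(u_j)$ together with the basic sequence $(z_n)$, one obtains
$$
(v_0,\ldots,v_{b-1})\sim_{D\max(K_1,K_2)}(z_0,\ldots,z_{b-1}),
$$
where $D$ depends only on $C$ and $C_z$ and, crucially, neither on the lengths $a,b$ nor on $N$. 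The delicate point is precisely this independence of the amalgamation constant from the block lengths, together with the bookkeeping needed when the $v_i$ are only approximately supported on the relevant intervals; both are handled by the projection constant $C$ and a small perturbation absorbed into $D$.

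With the estimate in hand, I would run an induction on $j$ establishing the following claim: \emph{if $s=(v_0,\ldots,v_{k-1})$ is a node of a Bourgain tree with $s\sim_{K'}(z_0,\ldots,z_{k-1})$ and the $v_i$ finitely supported on $(u_j)$, then $\rho(s)\geq\om\cdot j$ in the tree $T((z_n),\ku Y,D^jK')$.} The base case is trivial. For the successor step I use the standard rank calculus, according to which $\rho(s)\geq\om\cdot(j+1)$ holds as soon as, for every $m$, there is an extension $t\succeq s$ of length $\geq|s|+m$ with $\rho(t)\geq\om\cdot j$. Such a $t$ is produced by choosing $N$ beyond the support of $s$, invoking local minimality to embed the finite block $[z_k,\ldots,z_{k+\ell-1}]$ (any $\ell\geq m$) into the tail $\ku Y_N$ with constant $K$, and amalgamating with $s$ by the estimate above; the resulting node $t$ is again finitely supported, has constant $\leq DK'$, and hence by the induction hypothesis satisfies $\rho(t)\geq\om\cdot j$ in the tree $T((z_n),\ku Y,D^j\cdot DK')=T((z_n),\ku Y,D^{j+1}K')$.

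Applying the claim to the empty node $\tom$ (trivially of constant $K$) yields $\rho(\tom)\geq\om\cdot n$, hence ${\rm rank}(T((z_n),\ku Y,D^nK))>\om\cdot n$, for every $n$. Therefore
$$
{\rm Emb}((z_n),\ku Y)=\sup_{L\geq 1}{\rm rank}\big(T((z_n),\ku Y,L)\big)\geq\sup_{n}\om\cdot n=\om^2,
$$
which is exactly the assertion that $\ku W=[e_n]$ is $\om^2$-minimal. The only genuinely non-formal ingredient is the amalgamation estimate; everything else is the rank calculus for the ordinals $\om\cdot j$ together with repeated use of local minimality to place successive blocks of $\ku Z$ on disjoint tails of $\ku Y$.
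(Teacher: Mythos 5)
Your argument is correct and follows essentially the same route as the paper's: repeatedly place finite blocks of $(z_n)$ into successive tails of $\ku Y$ via local minimality, control the constant of the concatenated node by basis projections (so that it grows with the number of blocks but not with their lengths or positions), and conclude ${\rm rank}\big(T((z_n),\ku Y,\cdot)\big)\geq\om\cdot n$ for each $n$ separately before taking the supremum over constants. The only cosmetic difference is that you iterate a two-block amalgamation with a length-independent constant $D$ to accumulate $D^{n}$, whereas the paper amalgamates all $n$ blocks at once with a single constant $C=C(n,K)$.
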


\begin{proof}Let $K$ be the constant of local minimality.
For any infinite-dimensional subspace $\ku Y\subseteq \ku W$, block sequence $(w_i)\subseteq \ku W$  and $\alpha<\om^2$, we need to show that ${\rm Emb}((w_i),\ku Y)>\alpha$. So choose $n$ such that $\alpha<\om\cdot n$ and find some constant $C$ such that if $x_1<\ldots<x_n$ and $y_1<\ldots<y_n$ are finite block sequences of $(e_i)$ such that $\frac 1K\|x_i\|\leqslant \|y_i\|\leqslant K\|x_i\|$, then $(x_i)\sim_C(y_i)$. We claim that
$$
{\rm rank}\big(T((w_i),\ku Y,2C)\big)\geqslant \om\cdot n.
$$
To see this, find some block subspace $\ku X$ such that $\ku X\sqsubseteq_2\ku Y$. It suffices to prove that 
$$
{\rm rank}\big(T((w_i),\ku X,C)\big)\geqslant \om\cdot n.
$$

Let $k_1$ be given. We shall see that $\tom$ has rank $\geqslant \om(n-1)+k_1-1$ in $T((w_i),\ku X,C)$. So choose by local $K$-minimality some $z_0,\ldots,z_{k_1-1}\in \ku X$ such that
$$
(w_0,\ldots,w_{k_1-1})\sim_K(z_0,\ldots,z_{k_1-1}).
$$
It then suffices to show that $(z_0,\ldots,z_{k_1-1})$ has rank $\geqslant \om(n-1)$ in $T((w_i),\ku X,C)$, or, equivalently, that for any $k_2$, it has rank $\geqslant \om(n-2)+k_2-1$. So choose $z_{k_1},\ldots, z_{k_1+k_2-1}$ in $\ku X$ with support after all of $z_0,\ldots,z_{k_1-1}$ such that
$$
(w_{k_1},\ldots,w_{k_1+k_2-1})\sim_K(z_{k_1},\ldots,z_{k_1+k_2-1}).
$$ 
Again, it suffices to show that 
$$
(z_0,\ldots, z_{k_1-1},z_{k_1},\ldots,z_{k_1+k_2-1})
$$
has rank $\geqslant \om(n-2)$ in $T((w_i),\ku X,C)$. Et cetera.

Eventually, we will have produced 
$$
z_0,\ldots,z_{k_1-1}<z_{k_1},\ldots,z_{k_1+k_2-1}<\ldots<z_{k_1+\ldots+k_{n-1}},\ldots, z_{k_1+\ldots+k_{n}-1}
$$
such that for each $l$, 
$$
(w_{k_1+\ldots+k_{l-1}},\ldots,w_{k_1+\ldots+k_l-1})\sim_K(z_{k_1+\ldots+k_{l-1}},\ldots,z_{k_1+\ldots+k_l-1}).
$$
Since we have chosen the successive sections of $(z_i)$ successively on the basis, we have, by the choice of $C$, that
$$
(w_0,\ldots,w_{k_1+\ldots+k_{n}-1})\sim_C(z_0,\ldots,z_{k_1+\ldots+k_{n}-1}),
$$
 whereby $(z_0,\ldots,z_{k_1+\ldots+k_{n}-1})\in T((w_i),\ku X,C)$ and hence  has rank $\geqslant 0=\om(n-n)$ in  $T((w_i),\ku X,C)$. This finishes the proof.
\end{proof}

In \cite{minimal}, another dichotomy was proved stating that any infinite-dimensional Banach space contains a subspace with a basis  that is either tight with constants or is locally minimal. In particular, we have the following dichotomy.
\begin{thm}[V. Ferenczi and C. Rosendal \cite{minimal}]
Any infinite-dimensional Banach space contains an infinite-dimensional subspace with a basis that is either $\om$-tight or is $\om^2$-minimal.
\end{thm}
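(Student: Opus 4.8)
The plan is to read off this dichotomy as an immediate consequence of the two preceding results together with the Ferenczi--Rosendal dichotomy quoted just above it. First I would invoke that dichotomy: an arbitrary infinite-dimensional Banach space contains a subspace $\ku X=[x_n]$ with a Schauder basis that is either \emph{tight with constants} or \emph{locally minimal}. The whole argument then reduces to checking that each of these two alternatives already delivers one of the two conclusions ($\om$-tight or $\om^2$-minimal), and both of these verifications have in fact been carried out in the material preceding the statement.

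In the case that $\ku X$ is tight with constants, I would simply reproduce the rank computation given just after the definition of $\alpha$-tightness: for intervals $I_0<I_1<I_2<\ldots$ witnessing tightness with constants one has $[y_n]_{n\in I_K}\not\sqsubseteq_K[e_n]_{n\notin I_K}$, and hence, for any infinite $A\subseteq\N$ and any $K\in A$, the tree $T((y_n),[e_n\del n\notin\bigcup_{j\in A}I_j],K)$ has rank at most $\max I_K$. Taking the supremum over $K\in A$ bounds the embeddability rank by $\om$, so $\ku X$ is $\om$-tight. In the complementary case $\ku X$ is locally minimal, and the Proposition above gives at once that $\ku X$ is $\om^2$-minimal. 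In either case $\ku X$ is a subspace with a basis of the required form, which proves the theorem.

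I do not expect any genuine obstacle here: the real work is entirely contained in the cited \cite{minimal} dichotomy between tightness with constants and local minimality, and in the two quantitative estimates (tight with constants $\Rightarrow$ $\om$-tight, and locally minimal $\Rightarrow$ $\om^2$-minimal) already established. The only point worth double-checking is the bookkeeping: that the two alternatives output by the dichotomy are \emph{exactly} the hypotheses of those two implications, and that the numerical bounds $\om$ and $\om^2$ emerging from the rank calculation and from the Proposition are precisely the ones asserted in the statement. Both match, so the theorem is a direct corollary of the preceding development.
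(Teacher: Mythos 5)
Your proposal is correct and follows exactly the route the paper intends: the theorem is stated as an immediate consequence of the tight-with-constants versus locally-minimal dichotomy from \cite{minimal}, combined with the rank computation showing that tight with constants implies $\om$-tight and the preceding Proposition showing that locally minimal implies $\om^2$-minimal. Your bookkeeping check that the two alternatives match the hypotheses of those two implications is precisely the (implicit) content of the paper's ``In particular'' derivation.
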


One problem that remains open is to exhibit spaces that are $\alpha$-minimal and $\om\alpha$-tight for unbounded $\alpha<\om_1$. We are not aware of any construction in the literature that would produce this, but remain firmly convinced that such spaces must exist, since otherwise there would be a universal $\beta<\om_1$ such that any Banach space would either contain a minimal subspace or a $\beta$-tight subspace, which seems unlikely.

\begin{prob}
Show that there are $\alpha$-minimal, $\om\alpha$-tight spaces for unboundedly many $\alpha<\om_1$.
\end{prob}

Out main result, Theorem \ref{principal}, allows us to refine the classification scheme developed in \cite{gowers} and \cite{minimal}, by further differentiating the class of tight spaces into $\alpha$-minimal, $\om\alpha$-tight for $\alpha<\om_1$. Currently, the most interesting direction for further results would be to try to distinguish between different classes of minimal spaces, knowing that these pose particular problems for applying Ramsey Theory.

Apart from some basic facts about Schauder bases, the main tools of our paper originate in descriptive set theory for which our general reference is the  book by A. S. Kechris \cite{kechris}.  In particular, we follow his presentation of trees and games, except that we separate a game from its winning condition and thus talk about players having a strategy to {\em play  in a certain set}, rather than having a strategy to win.

\section{Setup}\label{discrete}

For the proof of Theorem \ref{main5}, we will need to replace Banach spaces with the more combinatorial setting of normed vector space over countable fields, which we will be using throughout the paper (cf. \cite{exact}).
So suppose $\ku W$ is a Banach space with a Schauder basis $(e_n)$. By a standard Skolem hull construction, we find a countable subfield $\go F\subseteq \R$ such that for any $\go F$-linear combination
$\sum_{n=0}^ma_ne_n$,
the norm $\|\sum_{n=0}^ma_ne_n\|$ belongs to $\go F$.  Let also $W$ be the countable-dimensional $\go
F$-vector space with basis $(e_n)$. In the following, we shall exclusively consider the $\go F$-vector space structure of $W$, and thus subspaces etc. refer to $\go F$-vector subspaces.
We equip $W$ with the discrete topology,
whereby any subset is open, and equip its countable power $W^\N$ with the
product topology. Since $W$ is a countable discrete set, $W^\N$ is a Polish, i.e., separable and completely metrisable,
space. Notice that a basis for the topology on $W^\N$ is given by sets of
the form
$$
N(x_0,\ldots,x_k)=\{(y_n)\in W^\N\del y_0=x_0\;\&\;\ldots\;\&\;y_k=x_k\},
$$
where $x_0,\ldots,x_k\in W$. Henceforth, we let $x,y,z,v$ be variables for {\em non-zero}
elements of $W$. If $x=\sum a_ne_n\in W$, we define the {\em support} of $x$ to be the finite, non-empty set ${\rm supp}(x)=\{n\del a_n\neq
0\}$ and set for $x,y\in W$,
$$
x<y\equi \a n\in {\rm supp}(x)\; \a m\in {\rm supp}(y)\;\; n<m.
$$
Similarly, if $k$ is a natural number, we set
$$
k<x\equi \a n\in {\rm supp}(x)\;\;k<n.
$$
Analogous notation is used for finite subsets of $\N$ and finite-dimensional subspaces of $W$. A finite or infinite
sequence $(x_0,x_1,x_2,x_3,\ldots)$ of vectors is said to be a {\em block sequence}
if for all $n$, $x_n<x_{n+1}$.

Note that, by elementary linear algebra, for all infinite-dimensional
subspaces $X\subseteq W$ there is a subspace $Y\subseteq X$ spanned by an
infinite block sequence, called a {\em block subspace}. Henceforth, we use
variables $X,Y,Z,V$ to denote infinite-dimensional block subspaces of $W$.
Also, denote finite sequences of non-zero vectors
by variables $\vec x, \vec y, \vec z,\vec v$. Finally, variables $E,F$ are used to denote finite-dimensional subspaces of $W$.

\section{Proof of Theorem \ref{tightness}}

We should first recall a natural strengthening of tightness from \cite{minimal}. Suppose $\ku W$ is a Banach space with a Schauder basis $(e_n)$ and find $\go F$ and $W$ as in section \ref{discrete}.
Let also $bb(e_n)\subseteq W^\N$ be the closed set of all block sequences in $W^\N$.   Let $\I$ be the countable set of all non-empty finite intervals $\{n,n+1,\ldots, m\}\subseteq \N$ and give $\I^\N$ the product topology, where $\I$ is taken discrete. We say that $\ku W=[e_n]$ is {\em continuously tight} if there is a continuous function
$$
f\colon bb(e_n)\til \I^\N
$$
such that for any block sequence $(y_n)\in W^\N$, $f\big((y_n)\big)=(I_n)\in \I^\N$ is a sequence of intervals such that $I_0<I_1<I_2<\ldots$ and such  that whenever $A\subseteq \N$ is infinite,
$$
[y_n]\not\sqsubseteq [e_n\del n\notin \bigcup_{k\in A}I_k].
$$
In other words, $f$ continuously chooses the sequence of intervals witnessing tightness.

As in the case of Banach spaces,  for any $K\geqslant 1$, block subspace $Y\subseteq W$, and block sequence $(x_n)$ of $(e_n)$, we define $T((x_n),Y,K)$ to be the non-empty tree consisting of all finite sequences $(y_0,\ldots,y_k)$ in $Y$ such that
$$
(y_0,\ldots,y_k)\sim_K(x_0,\ldots,x_k).
$$
Similarly define the embeddability index of $(x_n)$ in $Y$ by
$$
{\rm Emb}((x_n),Y)=\sup_{K\geqslant 1}{\rm rank}\big(T((x_n),Y,K)\big).
$$
Then, if $\ku Y$ denotes the closed $\R$-linear subspace of $\ku W$ spanned by $Y$, we have, as was observed earlier, that 
$$
{\rm Emb}((x_n),\ku Y)={\rm Emb}((x_n),Y).
$$

We recall the statement of Theorem \ref{tightness}.
\begin{thm}
Let $\ku W$ be a Banach space with a Schauder basis $(e_n)$ and having no minimal subspaces. Then there is a block subspace $\ku X=[x_n]$ that is  $\alpha$-tight for some countable ordinal $\alpha$.
\end{thm}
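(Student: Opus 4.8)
The plan is to reduce to a block subspace on which tightness is witnessed \emph{continuously}, and then to convert the resulting family of non-embeddability statements into a single coanalytic rank to which the Boundedness Theorem applies. By the dichotomy of \cite{minimal}, since $\ku W$ has no minimal subspace, we may pass to a continuously tight block subspace $\ku X=[x_n]\subseteq \ku W$. Working in its discrete version over the countable field $\go F$ as in Section \ref{discrete} --- and, to match the notation of the definitions, writing $(e_n)$ for its basis and $W$ for the $\go F$-span --- we are given a continuous map $f\colon bb(e_n)\til \I^\N$ assigning to each block sequence $(y_n)$ a sequence of intervals $(I_n)=f\big((y_n)\big)$ with $I_0<I_1<\ldots$ such that $[y_n]\not\sqsubseteq [e_n\del n\notin \bigcup_{k\in A}I_k]$ for every infinite $A\subseteq \N$. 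The intervals produced by $f$ will be exactly those witnessing $\alpha$-tightness; what remains is to find a single countable ordinal $\alpha$ bounding ${\rm Emb}\big((y_n),[e_n\del n\notin\bigcup_{k\in A}I_k]\big)$ uniformly over all $(y_n)$ and all infinite $A$.

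To this end, I would consider the space $P$ of all pairs $p=\big((y_n),A\big)$ where $(y_n)\in bb(e_n)$ and $A\in \ram$. As a $G_\delta$ subset of $W^\N\times 2^\N$, the space $P$ is Polish, hence analytic. For $p=\big((y_n),A\big)\in P$ write $(I_n)=f\big((y_n)\big)$ and $Y_A=[e_n\del n\notin\bigcup_{k\in A}I_k]$, and define a tree $T_p$ on the countable set $\N\sqcup W$ consisting of $\tom$ together with all sequences $(K,z_0,\ldots,z_k)$ such that $K\in \N$, each $z_i\in Y_A$, and $(z_0,\ldots,z_k)\sim_K(y_0,\ldots,y_k)$. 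In effect $T_p$ glues the trees $T\big((y_n),Y_A,K\big)$ for all $K$ below a common root, so that $T_p$ is ill-founded precisely when $[y_n]\sqsubseteq Y_A$, while when $T_p$ is well-founded one computes $\rho_{T_p}(\tom)=\sup_K{\rm rank}\big(T\big((y_n),Y_A,K\big)\big)={\rm Emb}\big((y_n),Y_A\big)$. Since $f$ is continuous and each $z_i$ has finite support, the relation ``$(K,z_0,\ldots,z_k)\in T_p$'' is Borel in $\big(p,(K,z_0,\ldots,z_k)\big)$, so $p\mapsto T_p$ is a Borel assignment of trees.

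By continuous tightness, $[y_n]\not\sqsubseteq Y_A$ for every $p\in P$, so every $T_p$ is well-founded; being a tree on a countable set, each then has countable rank, and in particular ${\rm Emb}\big((y_n),Y_A\big)<\om_1$. Now the set $\{p\del T_p\text{ is well-founded}\}$ is coanalytic and $p\mapsto {\rm rank}(T_p)$ is a coanalytic rank on it (see \cite{kechris}); as the analytic set $P$ is contained in it, the Boundedness Theorem for coanalytic ranks yields a countable ordinal $\alpha$ with ${\rm rank}(T_p)\leqslant\alpha$, and hence ${\rm Emb}\big((y_n),Y_A\big)\leqslant\alpha$, for all $p\in P$. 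Taking for each block sequence $(y_n)$ the intervals $f\big((y_n)\big)$, this is precisely the assertion that $\ku X$ is $\alpha$-tight, completing the proof.

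The main obstacle is the packaging in the middle step: one must fold the supremum over the constants $K$ --- which individually give only well-founded trees of \emph{a priori} unboundedly large countable rank --- into one tree $T_p$ depending in a Borel fashion on both the block sequence $(y_n)$ and the set $A$, so that non-embeddability becomes well-foundedness of a single coanalytically ranked family. It is exactly the continuity of $f$ that makes $Y_A$, and thus $T_p$, vary Borel-measurably in $p$ and so renders the Boundedness Theorem applicable; without the continuous choice of tightness intervals there would be no ambient analytic set on which to bound the rank.
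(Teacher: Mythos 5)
Your proposal is correct and follows essentially the same route as the paper: pass to a continuously tight block subspace via the dichotomy of \cite{minimal}, use the continuity of $f$ to obtain a Borel-parametrised family of well-founded embedding trees, and apply boundedness to get a uniform countable bound $\alpha$ on the ranks. The only (cosmetic) difference is that you fold the constant $K$ into a single glued tree $T_p$ and invoke the Boundedness Theorem for coanalytic ranks, whereas the paper keeps $K$ as an extra parameter of the Borel map and applies boundedness to the resulting analytic set of well-founded trees --- the two are the same tool.
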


\begin{proof}
By the results of \cite{minimal}, we have that, as $\ku W$ has no minimal subspaces, there is a block subspace $X=[x_n]$ of $W=[e_n]$ that is continuously tight as witnessed by a function $f$. So it suffices to show that for some $\alpha<\om _1$ and any block sequence $(y_n)$ of $(x_n)$, if $(I_n)=f\big((y_n)\big)$, then 
$$
{\rm Emb}\big((y_n),[x_n\del n\notin \bigcup_{k\in A}I_k]\big )\leqslant\alpha,
$$
for any infinite set $A\subseteq \N$.

Note that if $D$ is any countable set, we can equip the power set $\ku P(D)$ with the compact metric topology obtained from the natural identification with $2^D$. 
Let $[\N]$ denote the space of infinite subsets of $\N$ equipped with the Polish topology induced from $\ku P(\N)$. We define a Borel measurable function between Polish spaces
$$
T\colon bb(x_n)\times [\N]\times \N\til \ku P(X^{<\N}),
$$
by setting 
$$
T((y_n),A,K)=T((y_n),[x_n\del n\notin \bigcup_{j\in A}I_j],K),
$$
where $(I_n)=f\big((y_n)\big)$.

By assumption, the image of $T$ is an analytic set of well-founded trees on $X$. So, by the Boundedness Theorem for analytic sets of  well-founded trees, there is some $\alpha<\om_1$ such that
$$
\sup_{((y_n),A,K)\in bb(x_n)\times [\N]\times \N} {\rm rank}\big(T((y_n),[x_n\del n\notin \bigcup_{j\in A}I_j],K)\big)\leqslant\alpha,
$$
whereby, for any block sequence $(y_n)$ of $(x_n)$ and any infinite subset $A\subseteq \N$,
$$
{\rm Emb}\big((y_n),[x_n\del n\notin \bigcup_{k\in A}I_k]\big )\leqslant\alpha,
$$
showing that $\ku X$ is $\alpha$-tight. 
\end{proof}

\section{Proof of Theorem \ref{main5} }

\subsection{Generalised  $\alpha$-games}
Suppose $X\subseteq W$ and $\alpha$ is a countable ordinal number. We define  the {\em generalised Gowers $\alpha$-game   below $X$}, denoted $G^\alpha_X$,
between two players I and II as follows: 
$$
{\footnotesize 
\begin{array}{cccccccccccc}
{\bf I}  & Y_0   &				  & Y_1           &  			&&   &        & Y_k                       &                            \\
&\xi_0<\alpha& 			  &\xi_1<\xi_0&  			&&&           & \xi_k<\xi_{k-1} &                                \\
&                      &				  &                   &  		        	&& \ldots&&                                &                                 \\
{\bf II}  &          & F_0\subseteq Y_0 &               & F_1\subseteq Y_1&&&       &                                &  F_k\subseteq Y_k        \\
            &          & x_0\in F_0      &                        & x_1\in F_0+F_1    &     &&   &                               & x_k\in F_0+\ldots+F_k
\end{array}
}
$$
Here $\alpha>\xi_0>\xi_1>\ldots>\xi_k=0$ is a strictly decreasing sequence of ordinals, $Y_l\subseteq X$ are block subspaces, the $F_l\subseteq Y_l$ are finite-dimensional
subspaces, and $x_l\in F_0+F_1+\ldots+F_l$ non-zero vectors. Since I plays a strictly decreasing sequence of ordinals, the game will end once $\xi_k=0$ has been chosen and II has responded with some $x_k$. We then say that the sequence $(x_0,\ldots,x_k)$ of non-zero vectors is the {\em outcome} of the game.

Similarly, we can define the {\em asymptotic $\alpha$-game below $X$}, $F_X^\alpha$,  as follows
$$
{\footnotesize 
\begin{array}{cccccccccccc}
{\bf I}  & n_0   &				  & n_1           &  			&&   &        & n_k                       &                            \\
&\xi_0<\alpha& 			  &\xi_1<\xi_0&  			&&&           & \xi_k<\xi_{k-1} &                                \\
&                      &				  &                   &  		        	&& \ldots&&                                &                                 \\
{\bf II}  &          & n_0<F_0 &               & n_1<F_1 &&&       &                                &  n_k<F_k        \\
            &          & x_0\in F_0      &                        & x_1\in F_0+F_1    &     &&   &                               & x_k\in F_0+\ldots+F_k
\end{array}
}
$$
Here again, $\alpha>\xi_0>\xi_1>\ldots>\xi_k=0$ is a strictly decreasing sequence of ordinals, $n_l$ natural numbers, the $F_l$ are finite-dimensional
subspaces of $[e_i]_{i=n_l+1}^\infty$, and $x_l\in F_0+F_1+\ldots+F_l$ non-zero vectors. The game ends once I has played $\xi_k=0$  and II has responded with some $x_k$. The {\em outcome} is the sequence of non-zero vectors $(x_0,\ldots,x_k)$.

If  $\vec x$ is a finite sequence of non-zero vectors, we define the games $G^\alpha_X(\vec x)$, $F_X^\alpha(\vec x)$ as above, except that the outcome is now $\vec x\con (z_0,\ldots,z_k)$.

We also define adversarial $\alpha$-games by mixing the games above. For this, suppose $E,F$ are finite-dimensional subspaces of $W$ and $\vec z$ is an {\em even-length} sequence of non-zero vectors.

We define $A_X^\alpha(\vec z,E,F)$ by 
$$
{\footnotesize 
\begin{array}{ccccccccccc}
           && n_0<E_0    &                                     &n_1<E_1   &     &     &n_k<E_k               &                              \\
           && x_0        &                                           &x_1             &      &   & x_k&                     \\                                                                             
{\bf I}  && Y_0             &				  & Y_1           &     &      & Y_k                       &                            \\
           &&\xi_0& 			            &\xi_1&                        &  & \xi_k &                                \\
    &&                             &				  &                 &  & \ldots&                                &                                 \\
     & n_0&                   &             n_1                 &                   & n_2          &                                &                                 \\
{\bf II}  &&                    & F_0\subseteq Y_0   &                   &     F_1\subseteq Y_1   &   &                                     &  F_k\subseteq Y_k        \\
            &&                    & y_0                 &                  &                    y_1          &         &           & y_k
\end{array}
}
$$
and $B_X^\alpha(\vec z,E,F)$ by:
$$
{\footnotesize 
\begin{array}{ccccccccccc}
           && E_0\subseteq Y_0    &                                   &E_1\subseteq Y_1   &      &   & &E_k\subseteq Y_k               &                              \\
           && x_0&                                     &x_1&         & && x_k&                     \\                                                                             
{\bf I}  && n_0             &				  & n_1   &        &    &       & n_k                       &                            \\
           &&\xi_0& 			            &\xi_1& &       &   & \xi_k &                                \\
    &&                             &				  &          &        & & \ldots&                                &                                 \\
     & Y_0&                   &             Y_1                 &            & &Y_2               &                         &       &                                 \\
{\bf II}  &&                    &n_0< F_0   &                   &          &n_1<F_1 &                              &       &  n_k<F_k        \\
            &&                    & y_0                 &                  &         &  y_1 &                                  &    & y_k
\end{array}
}
$$
where 
$$
\alpha>\xi_0>\xi_1>\ldots>\xi_k=0
$$
is a decreasing sequence of ordinals, $Y_l\subseteq X$ are block subspaces, and  $n_l$ natural numbers. Moreover, in $A_X^\alpha(\vec z,E,F)$, 
$$
E_l\subseteq X\cap [e_i]_{i=n_l+1}^\infty\qquad\text{and} \qquad F_l\subseteq Y_l
$$
are finite-dimensional subspaces, while in $B_X^\alpha(\vec z,E,F)$,
$$
F_l\subseteq X\cap [e_i]_{i=n_l+1}^\infty\qquad\text{and} \qquad E_l\subseteq Y_l
$$
are finite-dimensional subspaces. Finally, the non-zero vectors $x_l$ and $y_l$ are chosen such that
$$
x_l\in E+E_0+\ldots+E_l,
$$
while 
$$
y_l\in F+F_0+\ldots+F_l.
$$
Both games terminate once I has played $\xi_k=0$ and II has responded with some $y_k$. The {\em outcome} is then the finite sequence of non-zero vectors 
$$
\vec z\con(x_0,y_0,x_1,y_1,\ldots,x_k,y_k).
$$

Now suppose instead that $\vec z$ is an {\em odd-length} sequence of non-zero vectors. 
We then define $A_X^\alpha(\vec z,E,F)$ by 
$$
{\footnotesize 
\begin{array}{ccccccccccc}
           &   &                                     &n_1<E_1   &     &  n_2<E_2 &  &n_k<E_k               &                              \\
           &     &                                           &x_1             &   & x_2  &   & x_k&                     \\                                                                             
{\bf I}  & Y_0             &				  & Y_1           &   &Y_2  &      & Y_k                       &                            \\
           && 			            &\xi_1&                        &     \xi_2 & & \xi_k &                                \\
    &                             &				  &                 &&  & \ldots&                                &                                 \\
   &                   &             n_1                 &                   & n_2          &                                &                      &           \\
{\bf II}  &                    & F_0\subseteq Y_0   &                   &     F_1\subseteq Y_1   &   &                   &                  &  F_k\subseteq Y_k        \\
            &                    & y_0                 &                  &                    y_1          &         &          & & y_k
\end{array}
}
$$
and $B_X^\alpha(\vec z,E,F)$ by:
$$
{\footnotesize 
\begin{array}{ccccccccccc}
           &   &                                   &E_1\subseteq Y_1   &      &E_2\subseteq Y_2  &&  &E_k\subseteq Y_k               &                              \\
           &&                                     &x_1&         &x_2 &&& x_k&                     \\                                                                             
{\bf I}  & n_0             &				  & n_1   &&n_2        &          & & n_k                       &                            \\
           && 			            &\xi_1& &     \xi_2    & & &\xi_k                                 \\
    &                             &				  &          &        & && \ldots&                                &                                 \\
     &                   &             Y_1                 &             &Y_2               &                                &                                 \\
{\bf II}  &                    &n_0< F_0   &                             &n_1<F_1 &      &        &            &           &  n_k<F_k        \\
            &                    & y_0                 &                          &  y_1 &             &        &          &       & y_k
\end{array}
}
$$
where 
$$
\alpha>\xi_1>\ldots>\xi_k=0
$$
is a decreasing sequence of ordinals, 
$$
x_l\in E+E_1+\ldots+E_l,
$$
$$
y_l\in F+F_0+\ldots+F_l,
$$
and otherwise the games are identical to those above. The {\em outcome} is now the finite sequence
$\vec z\con (y_0,x_1,y_1,\ldots,x_k,y_k)$.

If $\vec z=\tom$ and $E=F=\{0\}$, we shall write $A^\alpha_X$ and $B^\alpha_X$ instead of $A_X^\alpha(\vec z,E,F)$, respectively $B_X^\alpha(\vec z,E,F)$.
Thus, in both games $A^\alpha_X$ and $B^\alpha_X$, one should remember that I is the {\em
first} to play a vector. And in $A^\alpha_X$, I plays block subspaces and II plays
integers, while in $B^\alpha_X$, II takes the role of playing block subspaces
and I plays integers.

We should also mention the degenerate case when $\alpha=0$. The games $G^\alpha_X(\vec z)$ and $F^\alpha_X(\vec z)$ then terminate immediately with outcome $\vec z$ and, if $\vec z$ is of even length, the same holds for the games $A^\alpha_X(\vec z,E,F)$ and $B^\alpha_X(\vec z,E,F)$. On the other hand, if $\vec z$ is of odd length, in $A^\alpha_X(\vec z,E,F)$ and $B^\alpha_X(\vec z,E,F)$, I will play respectively $Y_0$ and $n_0$ and II respond with a single $y_0$ according to the rules, whereby the outcome is now $\vec z\con y_0$.

If $X$ and $Y$ are subspaces, where $Y$ is spanned by an infinite block
sequence $(y_0,y_1,y_2,\ldots)$, we write $Y\subseteq^* X$ if there is
$n$ such that $y_m\in X$ for all $m\geqslant n$. A simple diagonalisation argument
shows that if $X_0\supseteq X_1\supseteq X_2\supseteq \ldots$ is a decreasing
sequence of block subspaces, then there is some $Y\subseteq X_0$ such that
$Y\subseteq^* X_n$ for all $n$.

The aim of the games above is for each of the players to ensure that the
outcome lies in some predetermined set depending on the player. By
the asymptotic nature of the game, it is easily seen that if $T\subseteq
W^{<\N}$ and $Y\subseteq^* X$, then if II has a strategy in $G^\alpha_X$ or $A^\alpha_X(\vec z,E,F)$ to play in
$T$, i.e., to ensure that the outcome is in $T$, then II will have a strategy
in $G^\alpha_Y$, respectively $A^\alpha_Y(\vec z,E,F)$, to play in $T$ too. Similarly, if I has a strategy in $F^\alpha_X$ or $B^\alpha_X(\vec z,E,F)$ to play
in $T$, then I also has a strategy in $F^\alpha_Y$, respectively in $B^\alpha_X(\vec z,E,F)$, to play in $T$.

\subsection{Ramsey determinacy of adversarial $\alpha$-games}
We are now ready to prove the basic determinacy theorem for adversarial $\alpha$-games, which can be seen as a refinement of the determinacy theorem for open adversarial games (see Theorem 12 in \cite{exact}).

\begin{thm}\label{relational}
Suppose $\alpha<\om_1$ and $T\subseteq W^{<\N}$. Then for any $X\subseteq W$ there is $Y\subseteq X$
such that either
\begin{enumerate}
  \item II has a strategy in $A^\alpha_Y$ to play in $T$, or
  \item I has a strategy in $B^\alpha_Y$ to play in $\sim\!T$.
\end{enumerate}
\end{thm}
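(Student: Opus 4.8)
The plan is to prove, by transfinite induction on $\alpha<\om_1$, the following more general parametrised statement: for every $T\subseteq W^{<\N}$, every \emph{even-length} sequence $\vec z$ of non-zero vectors, all finite-dimensional $E,F\subseteq W$, and every $X\subseteq W$, there is $Y\subseteq X$ such that either II has a strategy in $A^\alpha_Y(\vec z,E,F)$ to play in $T$, or I has a strategy in $B^\alpha_Y(\vec z,E,F)$ to play in $\sim\!T$. The theorem is the special case $\vec z=\tom$, $E=F=\{0\}$, and the odd-length case follows from the even-length one by absorbing the initial Gowers half-round (in which I plays $Y_0$ and II answers with $F_0\subseteq Y_0$ and $y_0$) into the parameters, reducing the game to $A^\alpha_X(\vec z\con y_0,E,F+F_0)$ with $\vec z\con y_0$ of even length. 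The base case $\alpha=0$ is immediate: both $A^0_Y(\vec z,E,F)$ and $B^0_Y(\vec z,E,F)$ terminate at once with outcome $\vec z$, so II wins the former into $T$ exactly when $\vec z\in T$ while I wins the latter into $\sim\!T$ exactly when $\vec z\notin T$; these are complementary, so $Y=X$ works.

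The inductive step rests on unravelling one round. In $A^\alpha_X(\vec z,E,F)$, player I first plays $(Y_0,\xi_0)$ with $\xi_0<\alpha$, II answers $n_0$, I plays $(E_0,x_0)$ with $E_0\subseteq X\cap[e_i]_{i>n_0}$, II plays $(F_0,y_0)$ with $F_0\subseteq Y_0$, and the remainder is exactly $A^{\xi_0}_X\big(\vec z\con(x_0,y_0),\,E+E_0,\,F+F_0\big)$; dually, $B^\alpha_X(\vec z,E,F)$ reduces after one round to $B^{\xi_0}_X\big(\vec z\con(x_0,y_0),\,E+E_0,\,F+F_0\big)$, with I now supplying integers and II the block subspaces. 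Thus II wins $A^\alpha_X$ into $T$ iff $\forall(Y_0,\xi_0)\,\exists n_0\,\forall(E_0,x_0)\,\exists(F_0,y_0)$ the residual position is one in which II wins the $A^{\xi_0}$-game into $T$, and the condition for I in $B^\alpha_X$ is the exact dual pattern $\exists(n_0,\xi_0)\,\forall Y_0\,\exists(E_0,x_0)\,\forall(F_0,y_0)$. For each $\xi_0<\alpha$ the inductive hypothesis resolves every residual position on some subspace, colouring first-round configurations by ``$A^{\xi_0}$-favourable for II'' versus ``$B^{\xi_0}$-favourable for I.''

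The heart of the step is to collapse these per-configuration verdicts into a single first-round dichotomy on one subspace. I would feed the inductive verdicts as a target set into the determinacy theorem for open adversarial games (Theorem 12 of \cite{exact}): one round of $A^\alpha$/$B^\alpha$ is itself an open adversarial interaction — I choosing a block subspace and a vector against II choosing an integer and a vector, or vice versa — so that determinacy produces a subspace on which the outermost $\forall\exists\forall\exists$ (resp. $\exists\forall\exists\forall$) pattern is decided uniformly. Since $\alpha$ is countable, I would enumerate the admissible first ordinals $\xi_0<\alpha$ and treat them one at a time, at each stage shrinking the ambient subspace and applying both the inductive hypothesis and the open-determinacy step, and finally diagonalise to a single $Y$ with $Y\subseteq^*X_n$ for all $n$, as permitted by the standard fusion remark. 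The monotonicity observation recorded before the theorem — that strategies for II in $A$-games and for I in $B$-games transfer to $\subseteq^*$-smaller subspaces — guarantees that the strategies assembled along the fusion survive on $Y$, yielding alternative (1) or (2) there.

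The main obstacle is precisely this uniformisation. The inductive hypothesis delivers a \emph{different} good subspace for each first-round configuration and each $\xi_0$, and these must be amalgamated into one $Y$ that simultaneously (i) decides I's subspace move $Y_0$ — uncountably many a priori, but tamed by the asymptotic, finitary character of II's integer responses — (ii) decides every ordinal clock $\xi_0<\alpha$, handled by the countable enumeration and diagonalisation, and (iii) preserves the exact complementarity between the $A$-side and the $B$-side, so that failure of (1) on $Y$ forces (2). Verifying that the fusion can be organised so that all three hold at once, and that the open-adversarial determinacy of \cite{exact} applies verbatim to the one-round target set derived from the inductive verdicts, is where the genuine work lies.
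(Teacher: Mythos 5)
Your proposal is a plan rather than a proof: the step you yourself flag as ``where the genuine work lies'' --- collapsing the per-configuration verdicts supplied by the inductive hypothesis into a single first-round dichotomy on one subspace --- is precisely the mathematical content of the theorem, and it is left unexecuted. The concrete difficulty sits in the quantifier over block subspaces. At an odd-length position, the raw negation of ``II has a strategy in the $A$-game'' only yields: \emph{there exists} a block subspace $Y_0$ such that every response $F_0\subseteq Y_0$, $y_0\in F+F_0$ leads to a position unfavourable to II. But in $B^\alpha_Y$ player I plays \emph{integers}, not subspaces, so to extract alternative (2) this existential subspace quantifier must be upgraded to an asymptotic one: \emph{there exists} $n$ such that every $F_0\subseteq Y$ with $n<F_0$ leads to such a position. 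That conversion is not delivered by citing the open adversarial determinacy of \cite{exact} (whose proof faces the same issue and resolves it internally), nor by fusion alone; it is a stabilisation property that has to be arranged on the subspace in advance. The paper does exactly this via the auxiliary notion ``$(\vec x,E,F,\beta,X)$ is wicked'' together with clause (ii) of its diagonalisation, which says that whenever some $Y_0\subseteq Y$ witnesses that all $(\vec x,E,F+F_0,\beta,Y)$ with $F_0\subseteq Y_0$ are wicked, then already some integer $n$ witnesses this for all $n<F_0\subseteq Y$. Without an analogue of this clause, the failure of (1) on your fused $Y$ gives I no legal moves in $B^\alpha_Y$, and (2) cannot be concluded.

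It is also worth noting that the paper's route is structurally different from yours: there is no induction on $\alpha$ at all. Instead it runs a single combinatorial-forcing argument --- defining \emph{good}, \emph{bad}, \emph{worse} and \emph{wicked} for tuples $(\vec x,E,F,\beta,X)$ with $\beta\leqslant\alpha$, diagonalising once over the countably many such tuples to obtain one $Y$ satisfying (i) and (ii), proving that on $Y$ every bad tuple is worse, and then reading off I's strategy in $B^\alpha_Y$ directly from the iterated ``worse'' condition, with termination guaranteed by the strictly decreasing ordinal clock. The decreasing ordinals thus do the work of your transfinite induction internally, and no black-box appeal to Theorem 12 of \cite{exact} is needed. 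If you wish to salvage your level-by-level scheme, the missing ingredient is a per-level version of the wicked/stabilisation device; once added, your argument essentially reorganises into the paper's.
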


\begin{proof}
We say that
\begin{itemize}
  \item [(a)] $(\vec x,E,F,\beta,X)$ is {\em good} if II has a strategy in $A^\beta_X(\vec x,E,F)$ to play in
  $T$.
  \item [(b)] $(\vec x,E,F,\beta,X)$ is {\em bad} if $\a  Y\subseteq X$, $(\vec x,E,F,\beta,Y)$ is not good.
  \item [(c)] $(\vec x,E,F,\beta,X)$ is {\em worse} if it is bad and either
  \begin{enumerate}
    \item $|\vec x|$ is even and $\beta=0$, or
    \item $|\vec x|$ is even, $\beta>0$, and
    $$
    \a Y\subseteq X\;\e E_0\subseteq Y\; \e x_0\in E+E_0\;\e \gamma<\beta\;(\vec x\con x_0,E+E_0,F,\gamma,X) \textrm{ is bad},
    $$
    or 
    \item $|\vec x|$ is odd and
    $$
    \e n\;\a n<F_0\subseteq X\;\a y_0\in F+F_0\;(\vec x\con y_0,E,F+F_0,\beta,X) \textrm{ is
    bad},
    $$ 
  \end{enumerate}
  \item[(d)] $(\vec x,E,F,\beta,X)$ is {\em wicked} if $\a y_0\in F\;(\vec x\con y_0,E,F,\beta,X)$ is bad.  
\end{itemize}
One checks that good, bad and wicked are all $\subseteq^*$-hereditary in the last coordinate, that is, if $(\vec x,E,F,\beta,X)$ is good and $Y\subseteq^*X$, then also $(\vec x,E,F,\beta,Y)$ is good, etc. So, by diagonalising over the countably many tuples of  $\vec x$, $E$, $F$, and $\beta\leqslant \alpha$, we can find some $Y\subseteq X$ such that for all $\vec x$, $E$, $F$, and $\beta\leqslant \alpha$, 
\begin{itemize}
\item[(i)] $(\vec x, E, F,\beta,Y)$ is either good or bad, and
\item[(ii)] if there is some $Y_0\subseteq Y$ such that for all $F_0\subseteq Y_0$,  $(\vec x, E, F+F_0,\beta,Y)$ is wicked, then there is some $n$ such that for all $n<F_0\subseteq Y$,  $(\vec x, E, F+F_0,\beta,Y)$ is wicked.
\end{itemize}

\begin{lemme}
If $(\vec x,E,F,\beta,Y)$ is bad, then it is worse.
\end{lemme}

\begin{proof}
Assume first that $|\vec x|$ is even. The case when $\beta=0$ is trivial, so assume also $\beta>0$. Since $(\vec x,E,F,\beta,Y)$ is bad, we have 
\begin{displaymath}\begin{split}
\a V\subseteq Y \text{ II has no strategy in $A^\beta_V(\vec x,E,F)$ to play in }T. 
\end{split}\end{displaymath}
Referring to the definition of the game $A^\beta_V(\vec x,E,F)$, this implies that
\begin{displaymath}\begin{split}
\a V\subseteq Y\;& \e E_0\subseteq V\; \e x_0\in E+E_0\; \e \gamma<\beta\; \\
&\text{ II has no strategy in $A^\gamma_V(\vec x\con x_0,E+E_0,F)$ to play in }T,
\end{split}\end{displaymath}
(note that the subspace $Y_0\subseteq V$ also played by I becomes the first play of I in the game $A^\gamma_V(\vec x\con x_0,E+E_0,F)$).
But if $V\subseteq Y$ and II has no strategy in $A^\gamma_V(\vec x\con x_0,E+E_0,F)$ to play in $T$, then $(\vec x\con x_0,E+E_0,F,\gamma,V)$ is not good and hence must be bad. Thus, 
\begin{displaymath}\begin{split}
\a V\subseteq Y\;\e E_0\subseteq V\; &\e x_0\in E+E_0\; \e \gamma<\beta\; (\vec x\con x_0,E+E_0,F,\gamma,V) \text{ is bad,}
\end{split}\end{displaymath}
which is just to say that $(\vec x,E,F,\beta,Y)$ is worse.

Now suppose instead that $|\vec x|$ is odd. As $(\vec x,E,F,\beta,Y)$ is bad, it is not good and so II has no strategy in $A^\beta_{Y}(\vec x, E,F)$ to play in $T$. Therefore, for some $Y_0\subseteq Y$, we have 
$$
\a F_0\subseteq Y_0\; \a y_0\in F+F_0 \text{ II has no strategy in $A^\beta_Y(\vec x\con y_0,E,F+F_0)$ to play in }T.
$$
i.e., 
$$
\a F_0\subseteq Y_0\; \a y_0\in F+F_0 \;(\vec x\con y_0,E,F+F_0, \beta,Y)\text{ is not good and hence is bad.}
$$
In other words, 
$$
\a F_0\subseteq Y_0\; (\vec x,E,F+F_0,\beta,Y) \text{ is wicked.}
$$
So by (ii) we have 
$$
\e n\; \a n<F_0\subseteq Y\;(\vec x,E,F+F_0,\beta,Y) \text{ is wicked},
$$
that is
$$
\e n\; \a n<F_0\subseteq Y\;\a y_0\in F+F_0\; (\vec x\con y_0,E,F+F_0,\beta,Y) \text{ is bad,}
$$
showing that $(\vec x,E,F,\beta,Y)$ is worse.
\end{proof}
If $(\tom, \{0\},\{0\},\alpha,Y)$ is good, the first possibility of the statement of the theorem holds. So suppose instead  $(\tom, \{0\},\{0\},\alpha,Y)$ is bad and hence worse. Then, using the lemma and unraveling the definition of worse, we see that I has a strategy to play the game $B^\alpha_Y$ such that at any point in the game, if 
\begin{align*}
&\vec x=(x_0,y_0,x_1,y_1,\ldots,x_l,y_l)\\
&E_0,F_0,E_1,F_1,\ldots,E_l,F_l\\
&\alpha>\xi_0>\xi_1>\ldots>\xi_l,
\end{align*}
respectively,
\begin{align*}
&\vec y=(x_0,y_0,x_1,y_1,\ldots,y_{l-1},x_l)\\
&E_0,F_0,E_1,F_1,\ldots,F_{l-1},E_l\\
&\alpha>\xi_0>\xi_1>\ldots>\xi_l,
\end{align*}
have been played, then 
$$
(\vec x,E_0+\ldots+E_l,F_0+\ldots+F_l,\xi_l,Y),
$$ 
respectively 
$$
(\vec y,E_0+\ldots+E_l,F_0+\ldots+F_{l-1},\xi_l,Y),
$$ 
is worse.
Since $\alpha>\xi_0>\xi_1\ldots$, we eventually have $\xi_k=0$, that is, the game terminates with some worse 
$$
(\vec z,E_0+\ldots+E_k,F_0+\ldots+F_k,0,Y),
$$
whereby the outcome $\vec z$ lies in $\sim\!T$.
\end{proof}

\subsection{A game theoretic dichotomy}
We first need a lemma ensuring us a certain uniformity.
\begin{lemme}\label{uniformity}
Let $\beta<\om_1$ and suppose that for every $X\subseteq W$ there are $K\geqslant 1$ and a block sequence $(y_n)\subseteq X$ such that II has a strategy in $F^\beta_X$ to play  $(x_0,x_1,\ldots,x_k)$ satisfying
$$
(x_0,x_1,\ldots,x_k)\sim_K(y_0,y_1,\ldots,y_k).
$$
Then there are  $K\geqslant 1$ and $Y\subseteq W$ such that for all $X\subseteq Y$ there is a block sequence $(y_n)\subseteq X$ such that II has a strategy in $F^\beta_X$ to play  $(x_0,x_1,\ldots,x_k)$ satisfying
$$
(x_0,x_1,\ldots,x_k)\sim_K(y_0,y_1,\ldots,y_k).
$$
In other words, $K\geqslant 1$ can be chosen uniformly for all $X\subseteq Y$.
\end{lemme}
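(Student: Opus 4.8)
The plan is to fix the constant by a standard fusion (diagonalisation) argument, exploiting that the property in question is monotone in $K$ and hereditary along $\subseteq^*$ in the space variable. For $K\geqslant 1$ and $X\subseteq W$, say that $P_K(X)$ holds if there is a block sequence $(y_n)\subseteq X$ such that II has a strategy in $F^\beta_X$ to play an outcome $(x_0,\ldots,x_k)$ satisfying $(x_0,\ldots,x_k)\sim_K(y_0,\ldots,y_k)$. Two elementary observations drive the argument. First, $P_K$ is monotone in the constant: since $(x_i)\sim_K(y_i)$ implies $(x_i)\sim_{K'}(y_i)$ for $K'\geqslant K$, we have $P_K(X)\Rightarrow P_{K'}(X)$. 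Second, $P_K$ is hereditary upward in the space along $\subseteq^*$, that is, if $Y\subseteq^* X$ and $P_K(Y)$ holds, then $P_K(X)$ holds. In these terms the hypothesis says exactly that for every $X\subseteq W$ there is some $K$ with $P_K(X)$, while the conclusion asks for a single $K$ that works for all $X\subseteq Y$, for a suitable $Y$.

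Granting the two observations, I would argue by contradiction. If the conclusion fails, then for every $Y\subseteq W$ and every $K$ there is $X\subseteq Y$ for which $P_K(X)$ is false. Starting from $X_0=W$ and applying this at stage $m$ with the values $K=m$ and $Y=X_{m-1}$, we obtain a decreasing sequence $W=X_0\supseteq X_1\supseteq X_2\supseteq\ldots$ with $P_m(X_m)$ false for every $m$. By the standard diagonalisation for decreasing sequences of block subspaces, choose $X_\infty\subseteq W$ with $X_\infty\subseteq^* X_m$ for all $m$. Now fix any $K$ and any $m\geqslant K$. Since $P_m(X_m)$ fails and $P_K\Rightarrow P_m$, also $P_K(X_m)$ fails; and since $X_\infty\subseteq^* X_m$, the upward $\subseteq^*$-heredity of $P_K$, read contrapositively, gives that $P_K(X_\infty)$ fails as well. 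As $K$ was arbitrary, $P_K(X_\infty)$ fails for every $K$, contradicting the hypothesis applied to $X_\infty$. This produces the required $Y$ and $K$.

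The substantive point, and the step I expect to be the main obstacle, is the upward $\subseteq^*$-heredity of $P_K$: transferring a strategy for II in $F^\beta$ from the smaller space $X_\infty$ to the larger space $X_m$. This is precisely the pairing of player II with the asymptotic game $F^\beta$ that is \emph{not} among the $\subseteq^*$-invariance cases noted when the games were introduced, so it must be checked directly. The mechanism is the asymptotic nature of $F^\beta_{X_m}$: choose $N$ so large that every vector of $X_\infty$ supported beyond $N$ already lies in $X_m$ (possible since $X_\infty\subseteq^* X_m$), and let II pretend, whenever I plays an integer $n_l$, that I had instead played $\max(n_l,N)$. Running the $X_\infty$-strategy against these inflated moves forces each subspace $F_l$, and hence each outcome vector, to be supported beyond $N$ and therefore to lie in $X_m\cap[e_i]_{n_l+1}^\infty$, so the resulting play is legal in $F^\beta_{X_m}$.

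The one genuine piece of bookkeeping is the witnessing block sequence. The sequence $(y_n)\subseteq X_\infty$ furnished by $P_K(X_\infty)$ need not lie in $X_m$, but since it is a block sequence only finitely many of its terms fail to be supported beyond $N$; one therefore passes to a tail $(y_n)_{n\geqslant M}\subseteq X_m$ and verifies, using the prefix games $F^\beta_X(\vec z)$ together with the same inflation, that II can still match this tail with the transferred strategy. Once this heredity has been established, the fusion in the second paragraph is completely routine, and the uniform constant $K$ is obtained.
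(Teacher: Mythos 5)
Your overall architecture (negate the conclusion, build a decreasing sequence of block subspaces defeating each constant, diagonalise to $X_\infty$, and contradict the hypothesis at $X_\infty$) is the same as the paper's, and your transfer of II's strategy in $F^\beta$ from a smaller space to a larger one by inflating I's integer moves is correct and is also used in the paper. But the step you label as ``the one genuine piece of bookkeeping'' is in fact where the lemma lives, and your treatment of it is wrong: the claimed upward $\subseteq^*$-heredity of $P_K$ \emph{with the same constant} $K$ does not hold. If $P_K(X_\infty)$ holds with witness $(y_n)\subseteq X_\infty$, the transferred strategy in $F^\beta_{X_m}$ still produces outcomes $K$-equivalent to initial segments of the \emph{original} sequence $(y_0,y_1,\ldots)$; but $P_K(X_m)$ requires a witness lying in $X_m$, and only a tail $(y_n)_{n\geqslant M}$ does. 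Replacing the witness by its tail changes the target: an outcome satisfying $(x_0,\ldots,x_k)\sim_K(y_0,\ldots,y_k)$ need not satisfy $(x_0,\ldots,x_k)\sim_K(y_M,\ldots,y_{M+k})$, and the prefix games $F^\beta_X(\vec z)$ do not help, since II cannot force the run to skip the first $M$ coordinates of the target. The only available repair is to replace the initial segment $y_0,\ldots,y_{M-1}$ by vectors of $X_m$ sitting below $y_M$, which costs a multiplicative factor $c(M)$ in the equivalence constant (and requires the sequences to be normalised for such a factor to exist at all). So heredity only yields $P_{K\cdot c(M)}(X_m)$ for some $M$ you do not control, and your fusion, indexed simply by ``$P_m(X_m)$ fails,'' cannot absorb this degradation: to contradict $\lnot P_m(X_m)$ you would need $K\cdot c(M)\leqslant m$, but $c(M)$ depends on how deep $X_m$ sits and where the witness begins, which is circular in your setup.

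This is exactly the difficulty the paper's proof is engineered around. There, the $K$-th space $Y_K$ defeats constant $K$ for block sequences \emph{in $Y_K$}, and the diagonal sequence $(x_n)$ is built so that its tail from $x_N$ on lies in $Y_{N\cdot c(N)}$ \emph{and} there is room for $N$ normalised vectors of $Y_{N\cdot c(N)}$ below $x_N$. Then, given a witness $(y_n)\subseteq X$ with constant $N$, the inequality $\min{\rm supp}(x_N)\leqslant\min{\rm supp}(y_N)$ guarantees that only the first $N$ terms of $(y_n)$ need replacing, at cost $c(N)$, producing a witness inside $Y_{N\cdot c(N)}$ with constant $N\cdot c(N)$ --- which is precisely the constant that $Y_{N\cdot c(N)}$ was chosen to defeat. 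Your proposal is missing this quantitative coupling between the index of the fusion sequence and the loss incurred in moving the witness, and without it the contradiction does not go through.
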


\begin{proof}
Assume toward a contradiction that the conclusion fails. Then, as the games $F^\beta_X$ to play in any set $T\subseteq W^{<\N}$ are determined, i.e., either I or II has a winning strategy, we can inductively define $W\supseteq Y_0\supseteq Y_1\supseteq \ldots$ such that for any 
block sequence $(y_n)$ in $Y_K$, I has a strategy in $F^\beta_{Y_K}$ 
to play  $(x_0,x_1,\ldots,x_k)$ satisfying
$$
(x_0,x_1,\ldots,x_k)\not\sim_K(y_0,y_1,\ldots,y_k).
$$
For each $N\in \N$, let $c(N)$ be a constant such that if $(v_0,v_1,\ldots,v_{N-1},v_{N},v_{N+1},\ldots)$ and $(u_0,u_1,\ldots,u_{N-1},v_{N},v_{N+1},\ldots)$ are two normalised block sequences of $(e_n)$, then 
$$
(v_0,v_1,\ldots,v_{N-1},v_{N},v_{N+1},\ldots)\sim_{c(N)}(u_0,u_1,\ldots,u_{N-1},v_{N},v_{N+1},\ldots).
$$
Now choose a block sequence $(x_0,x_1,x_2,\ldots)$ such that for every $N$ there are normalised $v_0,v_1,\ldots,v_{N-1}\in Y_{N\cdot c(N)}$ with
$$
v_0<v_1<\ldots<v_{N-1}<x_N<x_{N+1}<\ldots
$$
and, moreover, such that $x_N,x_{N+1},\ldots\in Y_{N\cdot c(N)}$.  Set also $X=[x_n]$.

By the assumptions of the lemma, we can find some  constant $N\in \N$ and a normalised block sequence $(y_0,y_1,\ldots)$ in $X$ such that II has a strategy in $F^\beta_X$ to play  $(w_0,w_1,\ldots,w_k)$ with 
$$
(w_0,w_1,\ldots,w_k)\sim_N(y_0,y_1,\ldots,y_k).
$$
Since $\min {\rm supp}(x_N)\leqslant\min{\rm supp}(y_N)$, it follows by the choice of $(x_n)$ that there are normalised $v_0,v_1,\ldots,v_{N-1}\in Y_{N\cdot c(N)}$ such that
$$
v_0<v_1<\ldots<v_{N-1}<y_N<y_{N+1}<\ldots.
$$
Moreover, by the definition of $c(N)$, we have
$$
(v_0,v_1,\ldots,v_{N-1},y_{N},y_{N+1},\ldots)\sim_{c(N)}(y_0,y_1,\ldots,y_{N-1},y_{N},y_{N+1},\ldots).
$$
Thus, if we let $v_n=y_n$ for all $n\geqslant N$, we see that II has a strategy in $F_X^\beta$ to play 
$(w_0,w_1,\ldots,w_k)$ with 
$$
(w_0,w_1,\ldots,w_k)\sim_N(y_0,y_1,\ldots,y_k)\sim_{c(N)}(v_0,v_1,\ldots,v_k).
$$
But $X\subseteq^*Y_{N\cdot c(N)}$, so II has a strategy in $F^\beta_{Y_{N\cdot c(N)}}$ to play 
$(w_0,w_1,\ldots,w_k)$ with 
$$
(w_0,w_1,\ldots,w_k)\sim_{N\cdot c(N)}(v_0,v_1,\ldots,v_k).
$$
On the other hand, $(v_n)\subseteq Y_{N\cdot c(N)}$ and so I has a strategy in $F^\beta_{Y_{N\cdot c(N)}}$ 
to play  $(w_0,w_1,\ldots,w_k)$ such that
$$
(w_0,w_1,\ldots,w_k)\not\sim_{N\cdot c(N)}(v_0,v_1,\ldots,v_k),
$$
which is absurd. This contradiction proves the lemma.
\end{proof}

\begin{lemme}\label{going to adversarial}
Suppose $X\subseteq W$, $(y_0,y_1,y_2,\ldots)$ is a sequence of vectors in $W$, $\alpha<\om_1$ and $K\geqslant 1$. Assume that II has a strategy in $F_X^{\om\cdot \alpha}$ to play $(x_0,x_1,\ldots,x_k)$ such that
$$
(x_0,x_1,\ldots,x_k)\sim_K(y_0,y_1,\ldots,y_k).
$$
Then II has a strategy in $B^{\alpha}_X$ to play $(u_0,v_0,u_1,v_1,\ldots,u_k,v_k)$ such that 
$$
(u_0,u_1,\ldots,u_k)\sim_K(v_0,v_1,\ldots,v_k).
$$
\end{lemme}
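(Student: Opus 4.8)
The plan is to show that a strategy for II in the asymptotic game $F_X^{\om\cdot\alpha}$ producing an outcome $K$-equivalent to a fixed sequence $(y_n)$ can be ``simulated'' inside the adversarial game $B_X^\alpha$, where now II plays block subspaces and I plays integers. The essential point is that one round of $B^\alpha_X$, in which I decrements the ordinal from $\xi_l$ to $\xi_{l+1}<\xi_l$, must be matched against $\om$-many rounds of $F^{\om\cdot\alpha}_X$, since the ordinal budget $\om\cdot\alpha$ is exactly $\alpha$ copies of $\om$. So the first thing I would do is fix a strategy $\tau$ for II in $F^{\om\cdot\alpha}_X$ witnessing the hypothesis, and then describe how II, while playing $B^\alpha_X$, runs a private simulation of $F^{\om\cdot\alpha}_X$ against $\tau$.

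\textbf{Translating the moves.}
In $B^\alpha_X$, on each turn I plays an integer $n_l$ and an ordinal $\xi_l$, then II must produce a block subspace $E_l\subseteq Y_l$ (really $E_l\subseteq X\cap[e_i]_{i>n_l}$ in the odd bookkeeping) and finally a vector; meanwhile I also supplies vectors $x_l$. The two interleaved sequences $(u_l)$ and $(v_l)$ that form the outcome of $B^\alpha_X$ should be fed, alternately, as the ``integer/subspace'' data of the simulated $F^{\om\cdot\alpha}_X$. Concretely, I would have II, upon receiving I's integer $n_l$ in $B^\alpha_X$, forward $n_l$ as player I's integer move in the simulated asymptotic game and let $\tau$ respond; the asymptotic response $x$ of $\tau$ is then an element of the current finite-dimensional span, which II packages as its block-subspace move $E_l$ in $B^\alpha_X$. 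The vectors $u_l$ (played by I) and $v_l$ (produced by II via $\tau$) are recorded so that the final outcome sequence $(u_0,v_0,\ldots,u_k,v_k)$ of $B^\alpha_X$ contains the outcome of the simulated $F^{\om\cdot\alpha}_X$ as its even or odd subsequence. The hypothesis $(x_0,\ldots)\sim_K(y_0,\ldots)$ on the asymptotic side should then force the required relation $(u_0,\ldots,u_k)\sim_K(v_0,\ldots,v_k)$ between the two interleaved halves of the $B^\alpha_X$-outcome, because both halves are, by construction, $K$-equivalent to the common reference sequence $(y_n)$, and $\sim$ behaves transitively up to the product of constants (here one has to check the constant really comes out to $K$, not $K^2$, which is why one compares both sides to $(y_n)$ rather than to each other directly).

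\textbf{The ordinal bookkeeping.}
The heart of the argument — and the step I expect to be the main obstacle — is the ordinal accounting. Player I in $B^\alpha_X$ is allowed to drop $\xi_l$ by an arbitrary amount below $\alpha$, and the simulation must survive the \emph{worst} such drop, namely $\xi_{l+1}=\xi_l-1$ in the successor case or a jump across a limit. Since $\om\cdot(\xi_l+1)=\om\cdot\xi_l+\om$, a single unit decrement on the $B$-side corresponds to exactly one full block of $\om$ ordinals $\om\cdot\xi_l+m$ ($m<\om$) on the $F$-side, and II can spend those $\om$-many asymptotic moves while I makes one adversarial move. I would make this precise by maintaining, as an invariant of the simulation, that whenever the $B^\alpha_X$-game has reached ordinal $\xi_l$, the simulated $F^{\om\cdot\alpha}_X$-game stands at some ordinal in the interval $[\om\cdot\xi_l,\om\cdot\xi_l+\om)$, with enough slack in the $\om$-coordinate to absorb the finitely many asymptotic rounds needed to answer I's current move. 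The key closure check is that when I finally plays $\xi_k=0$ on the $B$-side, the asymptotic game can also be driven to its terminal ordinal $0$, so that $\tau$'s outcome is complete and the $\sim_K$-comparison applies; the limit stages require choosing the asymptotic ordinal just below $\om\cdot\xi_l$ to respect $\xi_{l+1}<\xi_l$, and verifying that the asymptotic ``$n_l<F_l$'' constraints remain compatible with the integers I feeds in is the routine but delicate part that ties the two games' support conditions together.
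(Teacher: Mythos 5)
There is a genuine gap at the core of your argument: you give no mechanism for controlling the half of the outcome played by I. In $B^\alpha_X$ the vectors $u_l$ are chosen adversarially inside $E_0+\ldots+E_l$ with $E_l\subseteq Y_l$, so your claim that ``both halves are, by construction, $K$-equivalent to the common reference sequence $(y_n)$'' cannot be made good for the $u$-half; and even if it could, comparing each half separately to $(y_n)$ yields only $(u_i)\sim_{K^2}(v_i)$ --- you flag this worry yourself but do not resolve it, and on your route it is not resolvable. The idea your proposal is missing is the following: II plays \emph{every} block subspace $Y_l$ equal to $Y=[y_n]$. This forces each $u_l$ to lie in $[y_n]$, hence $u_l=\sum_{j<m_l}\lambda^l_jy_j$ for some scalars, with $m_0<m_1<\ldots$ after padding with dummy terms. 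Upon receiving $u_l$, $n_l$ and $\xi_l$, II runs finitely many further rounds of the auxiliary game $F^{\om\cdot\alpha}_X$ (with I playing $n_l$ each time and ordinals descending through the block $[\om\xi_l,\om\xi_l+\om)$), obtaining new vectors $x_{m_{l-1}},\ldots,x_{m_l-1}$ and finite-dimensional subspaces whose sum he plays as $F_l$, and then answers with $v_l=\sum_{j<m_l}\lambda^l_jx_j$. Since the strategy in $F^{\om\cdot\alpha}_X$ guarantees $(x_0,\ldots,x_{m_k-1})\sim_K(y_0,\ldots,y_{m_k-1})$ and the $u_l$ and $v_l$ carry \emph{identical} coefficients over these two $K$-equivalent sequences, one gets $(u_l)\sim_K(v_l)$ exactly, with no loss of constant.

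By contrast, your ordinal bookkeeping is essentially the paper's: one decrement $\xi_{l+1}<\xi_l$ on the adversarial side buys a fresh block of $\om$ ordinals on the asymptotic side, enough to absorb the finitely many --- but not a priori bounded, since the number needed is $m_l-m_{l-1}$ and depends on I's move $u_l$ --- auxiliary rounds, and the simulation closes correctly when $\xi_k=0$. But this is the routine part; without the device of confining I's vectors to $[y_n]$ and mirroring their coefficients, the simulation has nothing to transport and the lemma does not follow. (A further sign of confusion: in $B^\alpha_X$ the subspaces $E_l$ are played by I, not by II, so the asymptotic response cannot be ``packaged as the block-subspace move $E_l$''; II's moves are the $Y_l$, the $F_l$ and the vectors $v_l$.)
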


\begin{proof}We shall describe the strategy for II in the game $B^\alpha_X$, the idea being that, when playing the game $B_X^\alpha$, II will keep track of an auxiliary run of $F^{\om \cdot\alpha}_X$, using his strategy there to compute his moves in $B^\alpha_X$.

Now, in $B^\alpha_X$, II will play subspaces $Y_0,Y_1, \ldots$ all equal to $Y=[y_n]$, whereby the subspaces $Y_0,Y_1,\ldots$ and $E_0,E_1,\ldots$ lose their relevance and we can eliminate them from the game for simplicity of notation. We thus have the following presentation of the game $B_X^\alpha$.
$$
{\footnotesize 
\begin{array}{ccccccccccc}
           && u_0\in Y&                                     &u_1\in Y&        &  & u_k\in Y&                     \\                                                                             
{\bf I}  && n_0             &				  & n_1           &     &      & n_k                       &                            \\
           &&\xi_0<\alpha& 			            &\xi_1<\xi_0&        &   & \xi_k<\xi_{k-1}      &                                \\
    &&                             &				  &               &    & \ldots&                                &                                 \\
{\bf II}  &&                    &n_0< F_0   &                   & n_1<F_1         & &                                     &  n_k<F_k        \\
            &&                    & v_0\in F_0   &                  & v_1\in F_0+F_1    &       &                                      & v_k\in F_0+\ldots+F_k
\end{array}
}
$$
So suppose $u_0,u_1,\ldots$ is being played by I in $B^\alpha_X$. To compute the answer $v_0,v_1,\ldots$, II follows his strategy in $F^{\om\cdot\alpha}_X$ to play $(z_0,z_1,\ldots,z_k)\sim_K(y_0,y_1,\ldots,y_k)$ as follows. First, as $u_0,u_1,\ldots\in Y=[y_n]$, we can write each $u_i$ as
$$
u_i=\sum_{j=0}^{m_i-1}\lambda^i_jy_j,
$$
where we, by adding dummy variables, can assume that $m_0<m_1<m_2<\ldots$.
So to compute $v_0$ and $F_0$ given $u_0$, $n_0$ and $\xi_0$, II first runs an initial part of $F^{\om\cdot\alpha}_X$ as follows
$$
{\footnotesize 
\begin{array}{cccccccccccc}
{\bf I}  & n_0   &				  & n_0           &  			&        & n_0                       &                            \\
&\om\xi_0\!+\!m_0\!-\!1& 			  &\om\xi_0\!+\!m_0\!-\!2&  			&           & \om\xi_0&                                \\
&                      &				  &                   &  		        	& \ldots&                                &                                 \\
{\bf II}  &          & n_0<F^0_1 &               & n_0<F^0_2&&       &                                 n_0<F^0_{m_0}        \\
            &          & x_0\in F^0_1      &                        & x_1\in F^0_1\!+\!F^0_2    &        &                               & x_{m_0\!-\!1}\in F^0_1\!+\ldots+\!F^0_{m_0}
\end{array}
}
$$
He then plays $F_0=F^0_1+\ldots+F^0_{m_0}$ and 
$$
v_0=\sum_{j=0}^{m_0-1}\lambda^0_jx_j\in F_0
$$
in $B^\alpha_X$.

Next, I will play some $u_1$, $n_1$ and $\xi_1$, and, to compute $v_1$ and $F_1$, II will continue the above run of $F^{\om\cdot\alpha}_X$ with
$$
{\footnotesize 
\begin{array}{cccccccccccc}
{\bf I}  & n_1   &				        &   			&        & n_1                       &                            \\
&\om\xi_1+m_1-1& 			 &  			&           & \om\xi_1&                                \\
&                      &				                     &  		        	& \ldots&                                &                                 \\
{\bf II}  &          & n_1<F^1_1 &               &&       &                                 n_1<F^1_{m_1}        \\
            &          & x_{m_0}\in F_0+F^0_1      &                        &        &                               & x_{m_1-1}\in F_0+ F^1_1+\ldots+F^1_{m_1}
\end{array}
}
$$
He then plays $F_1=F^1_1+\ldots+F^{1}_{m_1}$ and 
$$
v_1=\sum_{j=0}^{m_1-1}\lambda^1_jx_j\in F_0+F_1
$$
in $B^\alpha_X$.

So at each stage, II will continue his run of $F^{\om\cdot\alpha}_X$ a bit further until eventually I has played some $\xi_k=0$.
Thus, in the game $F_X^{\om\cdot\alpha}$, I will play ordinals
$$
\alpha>\om\xi_0+m_0-1>\om\xi_0+m_0-2>\ldots>\om\xi_0>\om\xi_1+m_1-1>\ldots>\om\xi_k=0
$$
and integers $n_0\geqslant n_0\geqslant\ldots\geqslant n_0\geqslant n_1\geqslant \ldots\geqslant n_k$, while II will use his strategy to play $(x_0,x_1,\ldots,x_{m_k-1})$ such that
$$
(x_0,x_1,\ldots,x_{m_k-1})\sim_K(y_0,y_1,\ldots,y_{m_k-1}).
$$
Since the $v_i$ and $u_i$ have the same coefficients over respectively $(x_n)$ and $(y_n)$, it follows that 
$$
(u_0,u_1,\ldots,u_k)\sim_K(v_0,v_1,\ldots,v_k).
$$
\end{proof}

By a  similar argument, we have the following lemma.
\begin{lemme}\label{passing to block sequences}
Suppose $X\subseteq W$, $(y_0,y_1,y_2,\ldots)$ is a block sequence in $W$, $\alpha<\om_1$ and $K\geqslant 1$. Assume that II has a strategy in $F_X^{\om\cdot \alpha}$ to play $(x_0,x_1,\ldots,x_k)$ such that
$$
(x_0,x_1,\ldots,x_k)\sim_K(y_0,y_1,\ldots,y_k).
$$
Then for any block sequence $(z_n)$ in $[y_n]$, II has a strategy in $F^{\alpha}_X$ to play $(v_0,v_1,\ldots,v_k)$ such that
$$
(v_0,v_1,\ldots,v_k)\sim_K(z_0,z_1,\ldots,z_k).
$$
\end{lemme}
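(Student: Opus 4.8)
The plan is to imitate the proof of Lemma~\ref{going to adversarial}, only now the target game is the one-sided asymptotic game $F^\alpha_X$ and the sequence $(z_n)$ is fixed in advance rather than revealed move by move, so there is no adversary to track. While playing $F^\alpha_X$, player II secretly maintains a single run of $F^{\om\cdot\alpha}_X$ and uses his assumed strategy there to compute his moves; the extra factor $\om$ in the index is exactly what lets him advance that run through finitely many rounds per single round of $F^\alpha_X$, enough to manufacture one block of $(z_n)$ at a time.

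First I would record the block decomposition of $(z_n)$ over $(y_n)$. Since $(z_n)$ is a block sequence in $[y_n]$, write $z_i=\sum_{j\in J_i}\mu^i_j y_j$, where the supports satisfy $J_0<J_1<\ldots$; setting $m_{-1}=0$ and $m_i=\max J_i+1$, we obtain $J_i\subseteq\{m_{i-1},\ldots,m_i-1\}$ and $m_0<m_1<\ldots$.

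Now I describe II's strategy in $F^\alpha_X$. Suppose that at round $l$ player I has played an integer $n_l$ and an ordinal $\xi_l$, where $\alpha>\xi_0>\xi_1>\ldots$ as required. Player II then advances his auxiliary run of $F^{\om\cdot\alpha}_X$ by $m_l-m_{l-1}$ rounds, feeding the virtual player I the integer $n_l$ (every round) together with the strictly decreasing ordinals
$$
\om\xi_l+(m_l-m_{l-1})-1>\om\xi_l+(m_l-m_{l-1})-2>\ldots>\om\xi_l,
$$
and recording the responses $F^l_1,\ldots,F^l_{m_l-m_{l-1}}$ and $x_{m_{l-1}},\ldots,x_{m_l-1}$ dictated by the assumed strategy. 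In $F^\alpha_X$ he then plays $F_l=F^l_1+\ldots+F^l_{m_l-m_{l-1}}$ and $v_l=\sum_{j\in J_l}\mu^l_j x_j$. Two bookkeeping checks are routine: the asymptotic requirement $n_l<F_l$ holds because every $F^l_s$ lies in $[e_i]_{i>n_l}$, and $v_l\in F_0+\ldots+F_l$ because the vectors $x_j$ with $j\in J_l\subseteq\{m_{l-1},\ldots,m_l-1\}$ are produced in blocks $0,\ldots,l$ of the auxiliary run.

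The step I expect to require the most care is verifying that the auxiliary ordinals really form one strictly decreasing sequence below $\om\cdot\alpha$. The only nontrivial inequalities occur at the block boundaries, where one needs $\om\xi_{l-1}>\om\xi_l+(m_l-m_{l-1})-1$; this follows from $\xi_l<\xi_{l-1}$, since then $\om\xi_l+\om=\om(\xi_l+1)\leqslant\om\xi_{l-1}$, while the very first ordinal satisfies $\om\xi_0+m_0-1<\om(\xi_0+1)\leqslant\om\cdot\alpha$. When I eventually plays $\xi_k=0$, the auxiliary run has driven its ordinal down to $\om\cdot 0=0$ and hence terminated, producing $(x_0,\ldots,x_{m_k-1})\sim_K(y_0,\ldots,y_{m_k-1})$ by the assumed strategy. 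Finally, for any scalars $(a_i)$ one has $\sum_i a_i v_i=\sum_j c_j x_j$ and $\sum_i a_i z_i=\sum_j c_j y_j$ with the same coefficient sequence $c_j=a_i\mu^i_j$ (for the unique $i$ with $j\in J_i$, using disjointness of the supports $J_i$); hence the equivalence $(x_j)\sim_K(y_j)$ transfers directly to give $(v_0,\ldots,v_k)\sim_K(z_0,\ldots,z_k)$, completing the argument.
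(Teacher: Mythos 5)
Your proposal is correct and follows essentially the same route as the paper: the paper's proof likewise decomposes each $z_i$ over $(y_n)$ via cut-off points $m_{-1}=0<m_0<m_1<\ldots$, has II simulate an auxiliary run of $F^{\om\cdot\alpha}_X$ advanced by $m_l-m_{l-1}$ rounds per round of $F^\alpha_X$ with the virtual ordinals $\om\xi_l+(m_l-m_{l-1})-1>\ldots>\om\xi_l$, and transfers the $\sim_K$-equivalence by matching coefficients. Your explicit verification of the ordinal inequalities at block boundaries is a detail the paper leaves implicit, but the argument is the same.
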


\begin{proof}
First, as $(z_n)$ is a block sequence in $[y_n]$, we can write each $z_i$ as
$$
z_i=\sum_{j=m_{i-1}}^{m_i-1}\lambda_jy_j,
$$
where $m_{-1}=0<m_0<m_1<m_2<\ldots$.

As before, when playing $F_X^\alpha$, II will keep track of an auxiliary run of $F^{\om\alpha}_X$, using his strategy there to compute his moves in $F^\alpha_X$. So the game  $F^\alpha_X$ runs as follows:
$$
{\footnotesize 
\begin{array}{ccccccccccc}
                                                               
{\bf I}  && n_0             &				  & n_1           &     &      & n_k                       &                            \\
           &&\xi_0& 			            &\xi_1&        &   & \xi_k   &                                \\
    &&                             &				  &               &    & \ldots&                                &                                 \\
{\bf II}  &&                    &n_0< F_0   &                   & n_1<F_1         & &                                     &  n_k<F_k        \\
            &&                    & v_0\in F_0   &                  & v_1\in F_0+F_1    &       &                                      & v_k\in F_0+\ldots+F_k
\end{array}
}
$$
To compute $v_0$,  II first runs an initial part of $F^{\om\alpha}_X$ as follows
$$
{\footnotesize 
\begin{array}{cccccccccccc}
{\bf I}  & n_0   &				  & n_0           &  			&        & n_0                       &                            \\
&\om\xi_0\!+\!m_0\!-\!1& 			  &\om\xi_0\!+\!m_0\!-\!2&  			&           & \om\xi_0&                                \\
&                      &				  &                   &  		        	& \ldots&                                &                                 \\
{\bf II}  &          & n_0<F^0_1 &               & n_0<F^0_2&&       &                                 n_0<F^0_{m_0}        \\
            &          & x_0\in F^0_1      &                        & x_1\in F^0_1\!+\!F^0_2    &        &                               & x_{m_0\!-\!1}\in F^0_1\!+\ldots+\!F^0_{m_0}
\end{array}
}
$$
He then plays $F_0=F^0_1+\ldots+F^0_{m_0}$ and 
$$
v_0=\sum_{j=m_{-1}}^{m_0-1}\lambda_jx_j\in F_0
$$
in $F^\alpha_X$.

Next, I will play some $\xi_1$ and $n_1$ and to compute $v_1$ and $F_1$, II will continue the above run of $F^{\om\alpha}_X$ with
$$
{\footnotesize 
\begin{array}{cccccccccccc}
{\bf I}  & n_1   &				        &   			&        & n_1                       &                            \\
&\om\xi_1+m_1-m_0-1& 			 &  			&           & \om\xi_1&                                \\
&                      &				                     &  		        	& \ldots&                                &                                 \\
{\bf II}  &          & n_1<F^1_1 &               &&       &                                 n_1<F^1_{m_1-m_0}        \\
            &          & x_{m_0}\in F_0+F^0_1      &                        &        &                               & x_{m_1-1}\in F_0+ F^1_1+\ldots+F^1_{m_1-m_0}
\end{array}
}
$$
He then plays $F_1=F^1_1+\ldots+F^{1}_{m_1-m_0}$ and 
$$
v_1=\sum_{j=m_0}^{m_1-1}\lambda_jx_j\in F_0+F_1
$$
in $F^\alpha_X$.

So at each stage, II will continue his run of $F^{\om\alpha}_X$ a bit further until eventually I has played some $\xi_k=0$.
Thus, in the game $F_X^{\om\alpha}$, I will play ordinals
$$
\alpha>\om\xi_0+m_0-1>\om\xi_0+m_0-2>\ldots>\om\xi_0>\om\xi_1+m_1-m_0-1>\ldots>\om\xi_k=0
$$
and integers $n_0\geqslant n_0\geqslant\ldots\geqslant n_0\geqslant n_1\geqslant \ldots\geqslant n_k$, while II will use his strategy to play $(x_0,x_1,\ldots,x_{m_k-1})$ such that
$$
(x_0,x_1,\ldots,x_{m_k-1})\sim_K(y_0,y_1,\ldots,y_{m_k-1}).
$$
Since the $v_i$ and $z_i$ have the same coefficients over respectively $(x_n)$ and $(y_n)$, it follows that 
$$
(v_0,v_1,\ldots,v_k)\sim_K(z_0,z_1,\ldots,z_k).
$$
\end{proof}

\begin{lemme}\label{gowers game}
Suppose $X\subseteq W$, $(y_n)$ is a block sequence in $W$, $\alpha<\om_1$, and $K,C\geqslant 1$.
Assume that 
\begin{itemize}
\item[(a)] II has a strategy in $F^\alpha_X$ to play $(x_0,\ldots,x_k)$ such that
$$
(x_0,x_{1},\ldots,x_{k})\sim_K(y_0,y_{1},\ldots,y_{k}), 
$$
and
\item[(b)] II has a strategy in $A_X^\alpha$ to play $(u_0,v_0,\ldots,u_k,v_k)$ such that
$$
(u_0,u_{1},\ldots,u_{k})\sim_C(v_0,v_{1},\ldots,v_{k}), 
$$
\end{itemize}
Then II has a strategy in $G_X^\alpha$ to play $(v_0,\ldots,v_k)$ such that
$$
(v_0,v_{1},\ldots,v_{k})\sim_{KC}(y_0,y_{1},\ldots,y_{k}). 
$$
\end{lemme}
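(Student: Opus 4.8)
The plan is to have II play $G^\alpha_X$ while running, in the background, one auxiliary copy of the adversarial game $A^\alpha_X$ governed by the strategy from (b) and one auxiliary copy of the asymptotic game $F^\alpha_X$ governed by the strategy from (a), and to wire the three games together so that a single play in $G^\alpha_X$ drives both auxiliary runs at once. The guiding observation is a duality: the constraint imposed on player I's vectors in $A^\alpha_X$ — namely that $u_l\in E_0+\ldots+E_l$ with $E_l\subseteq X\cap[e_i]_{i>n_l}$ for integers $n_l$ chosen by II — is exactly the constraint on the vectors II is obliged to produce in $F^\alpha_X$. Hence II can legitimately use his strategy in $F^\alpha_X$ to manufacture player I's moves in the auxiliary copy of $A^\alpha_X$, while feeding the block subspaces $Y_l$ and ordinals $\xi_l$ that the \emph{real} player I plays in $G^\alpha_X$ straight into the I-side of $A^\alpha_X$, unchanged.

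Concretely, I would set up the following round-by-round correspondence. Before play begins, strategy (b) prescribes II's opening integer $n_0$ in $A^\alpha_X$, and this $n_0$ is declared to be player I's first integer in $F^\alpha_X$. In round $l$, once the real player I has played $Y_l$ and $\xi_l$ in $G^\alpha_X$, II first feeds $(n_l,\xi_l)$ into $F^\alpha_X$ as I's move there — the integer $n_l$ being already available from the previous round of $A^\alpha_X$ — and uses strategy (a) to obtain a finite-dimensional $E_l\subseteq X\cap[e_i]_{i>n_l}$ together with a vector $u_l\in E_0+\ldots+E_l$. He then hands $(E_l,u_l,Y_l,\xi_l)$ to the auxiliary $A^\alpha_X$ as player I's round-$l$ move, which is legal precisely because $E_l$ and $u_l$ meet the asymptotic requirement. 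Strategy (b) now answers in $A^\alpha_X$ with an integer $n_{l+1}$, a subspace $F_l\subseteq Y_l$ and a vector $v_l\in F_0+\ldots+F_l$, and II simply copies $(F_l,v_l)$ as his genuine move in $G^\alpha_X$; since $F_l\subseteq Y_l$ and $v_l\in F_0+\ldots+F_l$ this is a legal $G^\alpha_X$-move, and the freshly produced $n_{l+1}$ is exactly what round $l+1$ will need. As all three games are run with the same ordinal sequence $\alpha>\xi_0>\ldots>\xi_k=0$, they terminate simultaneously at round $k$.

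When the play ends, the outcome of $G^\alpha_X$ is $(v_0,\ldots,v_k)$, that of $F^\alpha_X$ is $(u_0,\ldots,u_k)$, and that of $A^\alpha_X$ is $(u_0,v_0,\ldots,u_k,v_k)$. Strategy (a) guarantees $(u_0,\ldots,u_k)\sim_K(y_0,\ldots,y_k)$ and strategy (b) guarantees $(u_0,\ldots,u_k)\sim_C(v_0,\ldots,v_k)$, and composing these two estimates yields $(v_0,\ldots,v_k)\sim_{KC}(y_0,\ldots,y_k)$, as required. The only genuinely delicate point — the one I would verify most carefully — is the causal consistency of the wiring: one must check that at the start of each round the integer $n_l$ needed to run $F^\alpha_X$ has already been produced by $A^\alpha_X$, so that there is no circular dependence. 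This is exactly what the ``one step ahead'' convention for II's integers in $A^\alpha_X$ secures, since II plays $n_0$ before any vector is played and produces $n_{l+1}$ together with his round-$l$ response, so that $n_l$ is available precisely when round $l$ of $F^\alpha_X$ is to be played.
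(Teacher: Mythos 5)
Your proposal is correct and follows essentially the same route as the paper: the paper's proof also has II run auxiliary copies of $F^\alpha_X$ and $A^\alpha_X$, feeding the integers $n_l$ produced by strategy (b) into $F^\alpha_X$ as I's moves there, passing the resulting $(E_l,x_l)$ together with the real moves $(Y_l,\xi_l)$ from $G^\alpha_X$ into $A^\alpha_X$ as I's moves, and copying II's answers $(F_l,v_l)$ back into $G^\alpha_X$, after which the two equivalences compose to give the constant $KC$. The causal bookkeeping you single out (that $n_l$ is always available one round ahead) is exactly the point the paper's diagrams encode, so there is no gap.
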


\begin{proof}
To compute his strategy in $G_X^\alpha$, II will play auxiliary runs of the games $A_X^\alpha$ and $F^\alpha_X$ in which he is using the strategies described above. Information is then copied between the games as indicated in the diagrams below.

The game $G^\alpha_X$:
$$
{\footnotesize 
\begin{array}{cccccccccccc}
{\bf I}   & Y_0      &   	                 & Y_1 	   &                                            &&            &&Y_k                       \\
           &\xi_0     & 		        & \xi_1           &                                            &&            & &\xi_k                                           \\
           &              &                        &                     &                                            &&\ldots  &   &                           \\
{\bf II}  &             &F_0\subseteq Y_0  &          &F_1\subseteq Y_1            &&           & &          & F_k\subseteq Y_k \\
            &             & v_0\in F_0    &                     &v_{1}\in F_0+F_{1}            &&           & &          &v_{k}\in F_0+\ldots+F_k
 \end{array}
}
$$
The game $F_X^\alpha$:
$$
{\footnotesize 
\begin{array}{cccccccccccc}
{\bf I}   & n_0      &   	                 & n_1 	   &                                            &&            &&n_k                       \\
           &\xi_0     & 		        & \xi_1           &                                            &&            & &\xi_k                                           \\
           &              &                        &                     &                                            &&\ldots  &   &                           \\
{\bf II}  &             &n_0<E_0         &                    &n_1<E_1                             &&           & &          & n_k<E_k                  \\
            &             & x_0\in E_0    &                     &x_{1}\in E_0+E_{1}            &&           & &          &x_{k}\in E_0+\ldots+E_k
 \end{array}
}
$$
The game $A_X^\alpha$:
$$
{\footnotesize 
\begin{array}{ccccccccccc}
           && n_0<E_0    &                                     &n_1<E_1   &          &n_k<E_k               &                              \\
           && x_0\in E_0 &                                     &x_1\in E_0\!+\!E_1&          & x_k\in E_0\!+\!\ldots\!+\!E_k&                     \\                                                                             
{\bf I}  && Y_0             &				  & Y_1           &           & Y_k                       &                            \\
           &&\xi_0             & 			            &\xi_1          &           & \xi_k                      &                                \\
    &&                             &				  &                   & \ldots&                                &                                 \\
     &n_0&                   &             n_1                 &                   &           &                                &                                 \\
{\bf II}  &&                    & F_0\subseteq Y_0   &                   &           &                                     &  F_k\subseteq Y_k        \\
            &&                    &  v_0\in F_0                &                  &            &                                      & v_k\in F_0\!+\!\ldots\!+\!F_k
\end{array}
}
$$

By chasing the diagrams, one sees that this fully determines how II is to play in $G^\alpha_X$. Moreover, since II follows his strategy in $F_X^\alpha$, we have 
$$
(x_0,x_{1},\ldots,x_{k})\sim_K(y_0,y_{1},\ldots,y_{k}), 
$$
while the strategy in $A_X^\alpha$ ensures that 
$$
(x_0,x_{1},\ldots,x_{k})\sim_C(v_0,v_{1},\ldots,v_{k}),
$$
from which the conclusion follows.
\end{proof}

\begin{thm}\label{main1}
Suppose $\alpha<\om_1$. Then there is  $X\subseteq W$ such that one of the following holds
\begin{enumerate}
\item For every block sequence $(y_n)$ in $X$ and $K\geqslant 1$, I has a strategy in $F^{\om\alpha}_X$ to play $(x_0,x_1,\ldots,x_k)$ satisfying
$$
(x_0,x_1,\ldots,x_k)\not\sim_K(y_0,y_1,\ldots,y_k).
$$
\item For some $K\geqslant 1$ and every block sequence $(z_n)\subseteq X$, II has a strategy in $G^\alpha_X$ to play $(x_0,x_1,\ldots,x_k)$ satisfying
$$
(x_0,x_1,\ldots,x_k)\sim_K(z_0,z_1,\ldots,z_k).
$$
\end{enumerate}
\end{thm}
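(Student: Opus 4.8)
The plan is to run a dichotomy on whether alternative (1) already holds for some $X\subseteq W$. If it does, there is nothing to prove, so assume that (1) fails for every $X$. Since each game $F^{\om\alpha}_X$ is well-founded (the ordinals $\xi_l$ strictly decrease below $\om\alpha$), it is determined for every target set, exactly as used in the proof of Lemma \ref{uniformity}. Thus the failure of (1) at $X$ means precisely that for some block sequence $(y_n)\subseteq X$ and some $K$, player I has no strategy in $F^{\om\alpha}_X$ to play into $\{(x_i)\del (x_i)\not\sim_K(y_i)\}$, and so, by determinacy, II has a strategy in $F^{\om\alpha}_X$ to play $(x_i)\sim_K(y_i)$. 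This is exactly the hypothesis of Lemma \ref{uniformity} with $\beta=\om\alpha$, which I would invoke to obtain a single constant $K$ and a subspace $Y\subseteq W$ such that for every $X\subseteq Y$ there is a block sequence $(y_n)\subseteq X$ that II can realise up to $K$ in $F^{\om\alpha}_X$. The uniformity of $K$ is the point that makes the next step possible, since it lets a single target set serve all $X$ simultaneously.

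Next, fix the set $T\subseteq W^{<\N}$ of all even-length interleaved sequences $(u_0,v_0,\ldots,u_k,v_k)$ with $(u_0,\ldots,u_k)\sim_K(v_0,\ldots,v_k)$. Applying Lemma \ref{going to adversarial} to the block sequence furnished by uniformity shows that for every $X\subseteq Y$ player II has a strategy in $B^\alpha_X$ to play into $T$. Now I would feed $T$ and $Y$ into the adversarial determinacy Theorem \ref{relational}, obtaining $Y'\subseteq Y$ for which either II has a strategy in $A^\alpha_{Y'}$ to play into $T$, or I has a strategy in $B^\alpha_{Y'}$ to play into $\sim\!T$. The second possibility is excluded, since pitting such a strategy for I against II's $B^\alpha_{Y'}$-strategy into $T$ would produce a single outcome lying in both $T$ and $\sim\!T$; hence II has a strategy in $A^\alpha_{Y'}$ to play into $T$, which is precisely hypothesis (b) of Lemma \ref{gowers game} on $Y'$ with $C=K$.

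It remains to supply hypothesis (a) of Lemma \ref{gowers game} for every relevant target. Applying uniformity once more to $X=Y'$ produces a block sequence $(y_n)\subseteq Y'$ that II realises up to $K$ in $F^{\om\alpha}_{Y'}$; Lemma \ref{passing to block sequences} then upgrades this to: for every block sequence $(z_n)$ in $[y_n]$, II has a strategy in $F^\alpha_{Y'}$ to play $(v_i)\sim_K(z_i)$. Thus both hypotheses of Lemma \ref{gowers game} hold on the common ambient space $Y'$, and for each block sequence $(z_n)\subseteq[y_n]$ the lemma yields a strategy for II in $G^\alpha_{Y'}$ to play $(x_i)\sim_{K^2}(z_i)$. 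Finally I would set $X=[y_n]$ and descend from ambient $Y'$ to ambient $[y_n]$ using the $\subseteq^*$-monotonicity of II's strategies in the generalised Gowers game noted in the setup: since $[y_n]\subseteq^*Y'$, II's $G^\alpha_{Y'}$-strategy into $\{(x_i)\del (x_i)\sim_{K^2}(z_i)\}$ converts into a $G^\alpha_{[y_n]}$-strategy for the same set. As the block sequences of $[y_n]$ are exactly the block sequences of the final space $X$, this establishes alternative (2) with constant $K^2$.

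The main obstacle is the mismatch of ambient spaces. Uniformity only hands us a realisable block sequence inside $X$, never a sequence spanning $X$, so I cannot directly arrange a single space whose own generator is realised in the asymptotic game played over that same space. The way around this is to perform the entire synthesis of the Gowers-game strategy at the larger ambient space $Y'$, where hypotheses (a) and (b) genuinely share an ambient, and to pass to the smaller space $[y_n]$ only at the very end, exploiting that II's winning positions in $G^\alpha$ are preserved under shrinking the ambient modulo finitely many coordinates. The other point requiring care is the twofold appeal to determinacy: first to convert the failure of (1) into a reproduction statement, and then to rule out the $B^\alpha_{Y'}$ branch of Theorem \ref{relational}.
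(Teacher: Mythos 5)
Your proposal is correct and follows essentially the same route as the paper's own proof: determinacy of $F^{\om\alpha}_X$ plus Lemma \ref{uniformity} to get a uniform $K$, Lemma \ref{going to adversarial} to rule out the bad alternative of Theorem \ref{relational}, then Lemmas \ref{passing to block sequences} and \ref{gowers game} combined on the common ambient space, finally passing to $[y_n]$ with constant $K^2$ via $\subseteq^*$-heredity of II's strategies. The care you take over the ambient-space mismatch is exactly the (implicit) final step of the paper's argument.
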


\begin{proof}
Suppose that there is no $X\subseteq W$ for which (1) holds. Then, using that the game $F_X^{\om\alpha}$ is determined,  for every $X\subseteq W$ there is a block sequence $(y_n)$ in $X$ and some $K\geqslant 1$ such that II has a strategy in $F^{\om\alpha}_X$ to play $(x_0,x_1,\ldots,x_k)$ satisfying
$$
(x_0,x_1,\ldots,x_k)\sim_K(y_0,y_1,\ldots,y_k).
$$
So, by Lemma \ref{uniformity}, there is some $K\geqslant 1$ and $Y\subseteq W$ such that  for all $X\subseteq Y$ there is some block 
sequence $(y_n)$  in $X$ such that II has a strategy in $F^{\om\alpha}_X$ to play $(x_0 , x_1 , \ldots, x_k )$ satisfying
$$
(x_0 , x_1 , \ldots, x_k )\sim_K(y_0 , y_1 , \ldots, y_k ).
$$

If thus follows from Lemma \ref{going to adversarial} that for all $X\subseteq Y$,  II has a strategy in $B^{\alpha}_X$ to play $(u_0,v_0,u_1,v_1,\ldots,u_k,v_k)$ such that 
$$
(u_0,u_1,\ldots,u_k)\sim_K(v_0,v_1,\ldots,v_k).
$$
Therefore, there is no $X\subseteq Y$ such that I has a strategy in $B^{\alpha}_X$ to play a sequence 
$(u_0,v_0,u_1,v_1,\ldots,u_k,v_k)$ satisfying
$$
(u_0,u_1,\ldots,u_k)\not\sim_K(v_0,v_1,\ldots,v_k),
$$
and thus, by Theorem \ref{relational}, we can find some $X\subseteq Y$ such that II has a strategy in $A^\alpha_X$ to play 
$(u_0,v_0,u_1,v_1,\ldots,u_k,v_k)$ satisfying 
$$
(u_0,u_1,\ldots,u_k)\sim_K(v_0,v_1,\ldots,v_k).
$$

Let $(y_n)$ be the block sequence in $X$ such that  II has a strategy in $F^{\om\alpha}_X$ to play $(x_0 , x_1 , \ldots, x_k )$ satisfying 
$$
(x_0 , x_1 , \ldots, x_k )\sim_K(y_0 , y_1 , \ldots, y_k ).
$$
Then, using Lemma \ref{passing to block sequences}, we see that for any block sequence $(z_n)\subseteq [y_n]$, II has a strategy in $F^{\alpha}_X$ to play $(x_0 , x_1 ,\ldots , x_k )$ such 
that 
$$
(x_0 , x_1 , \ldots, x_k )\sim_K(z_0 , z_1 , \ldots , z_k ).
$$

In other words, there is some block sequence $(y_n)$ in $X$ such that for any block sequence $(z_n)\subseteq [y_n]$
\begin{itemize}
\item[(a)] II has a strategy in $F^\alpha_X$ to play $(x_0,\ldots,x_k)$ satisfying
$$
(x_0,x_{1},\ldots,x_{k})\sim_K(z_0,z_{1},\ldots,z_{k}), 
$$
and
\item[(b)] II has a strategy in $A_X^\alpha$ to play $(u_0,v_0,\ldots,u_k,v_k)$ satisfying
$$
(u_0,u_{1},\ldots,u_{k})\sim_K(v_0,v_{1},\ldots,v_{k}), 
$$
\end{itemize}
So finally, by Lemma \ref{gowers game}, for any block sequence $(z_n)\subseteq [y_n]$, II has a strategy in $G_X^\alpha$ to play $(v_0,\ldots,v_k)$ such that
$$
(v_0,v_{1},\ldots,v_{k})\sim_{K^2}(z_0,z_{1},\ldots,z_{k}). 
$$
Replacing $X$ by the block subspace $[y_n]\subseteq X$ and $K$ by $K^2$, we get (2).
\end{proof}

\subsection{The embeddability index}

\begin{lemme}\label{loose}
Suppose $\alpha<\om_1$, $K\geqslant 1$, $X\subseteq W$ and $(z_n)\subseteq W$ is a block sequence such that  II has a strategy in $G^\alpha_X$ to play $(y_0,\ldots,y_k)$ satisfying 
$$
(y_0,\ldots,y_k)\sim_K(z_0,\ldots,z_k).
$$
Then for any subspace $Y\subseteq X$, ${\rm rank}\big(T((z_n),Y,K)\big)>\alpha$.
\end{lemme}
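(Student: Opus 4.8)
The plan is to prove the statement by transfinite induction on the ordinal ``budget,'' reading II's strategy in $G^\alpha_X$ as a witness that descends through the tree $T((z_n),Y,K)$ and certifies high rank. The crucial preliminary observation is that although II's strategy produces vectors lying in $X$, it is player I who controls the block subspaces $Y_l$; since $Y\subseteq X$ we may legally force every move into $Y$ by having I always play $Y_l=Y$. Then each $F_l\subseteq Y$, so every vector II produces lies in $F_0+\ldots+F_l\subseteq Y$, and hence each outcome is a node of $T((z_n),Y,K)$ and not merely of $T((z_n),X,K)$.

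Second, I would record the heredity of $\sim_K$ under initial segments: if $(a_0,\ldots,a_m)\sim_K(b_0,\ldots,b_m)$ then $(a_0,\ldots,a_l)\sim_K(b_0,\ldots,b_l)$ for every $l\leqslant m$ (set the remaining coefficients to $0$ in the defining inequalities). Consequently, along any run consistent with II's strategy, not only the final outcome but every initial segment of the vectors produced so far is $\sim_K$ to the corresponding initial segment of $(z_n)$, and, being contained in $Y$, already belongs to $T((z_n),Y,K)$.

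The heart of the argument is the induction itself. I would prove: for every $\beta\leqslant\alpha$, at any position of $G^\alpha_X$ consistent with II's strategy that is reached either at the very start or just after a move of II, with $\vec w$ the sequence of vectors produced so far (all in $Y$) and with I's next ordinal constrained to be $<\beta$, one has $\vec w\in T((z_n),Y,K)$ and $\rho_T(\vec w)\geqslant\beta$. The base case $\beta=0$ is the terminal position, where $\vec w$ is the full outcome, so $\vec w\in T$ by the strategy and $\rho_T(\vec w)\geqslant 0$ trivially. For $\beta>0$ and each $\gamma<\beta$, let I play $\xi=\gamma$ together with $Y_l=Y$; II's strategy answers with some $F\subseteq Y$ and a vector $w\in Y$, producing a position with budget $\gamma$ and node $\vec w\con w$. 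By the inductive hypothesis $\vec w\con w\in T$ and $\rho_T(\vec w\con w)\geqslant\gamma$, whence, taking $t=\vec w\con w\succ\vec w$ in the defining supremum, $\rho_T(\vec w)\geqslant\rho_T(\vec w\con w)+1\geqslant\gamma+1$. Letting $\gamma$ range over all ordinals below $\beta$ gives $\rho_T(\vec w)\geqslant\sup_{\gamma<\beta}(\gamma+1)=\beta$. Applying this at the start of the game, where the budget is $\alpha$ and $\vec w=\tom$, yields $\rho_T(\tom)\geqslant\alpha$, so ${\rm rank}\big(T((z_n),Y,K)\big)=\rho_T(\tom)+1>\alpha$, as required.

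The main obstacle is getting the bookkeeping exactly right: one must run the induction on the residual ordinal budget, so that I's choice $\xi=\gamma$ corresponds precisely to the clause $\rho_T(\vec w)=\sup\{\rho_T(t)+1\del \vec w\prec t\}$ in the definition of rank. The tempting shortcut of reducing the continuation after one round to a fresh game $G^\gamma_X(\vec w)$ does not match literally, because after a move of II the next vector may still draw on all previously played subspaces $F_0+\ldots+F_{l-1}$, whereas in a fresh game the first new vector would be confined to the newest $F$. Carrying out the induction directly on the positions of a single run, while merely \emph{following} the given strategy rather than reconstructing II's moves from a named subgame, sidesteps this mismatch entirely.
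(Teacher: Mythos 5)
Your proof is correct and follows essentially the same route as the paper's: II's strategy is run against I playing $Y$ at every turn, with the strictly decreasing ordinals tracking the rank of the node built so far in $T((z_n),Y,K)$. The only difference is presentational --- the paper argues by contradiction, having I read off the ordinals $\xi_0>\xi_1>\ldots$ from the rank function of a putatively low-rank tree, whereas you run a direct transfinite induction on the residual budget; both hinge on the same two observations, namely that I may legally force every move into $Y$, and that initial segments of outcomes consistent with the strategy already lie in the tree.
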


\begin{proof}
Let $Y\subseteq X$ and suppose toward a contradiction that ${\rm rank}\big(T((z_n),Y,K)\big)=\xi_0+1\leqslant\alpha$, where $\xi_0$ is the rank of the root $\tom$ in $T((z_n),Y,K)$. Now, let I play $Y,\xi_0$ in $G^\alpha_X$ and let II respond using his strategy
$$
{\footnotesize 
\begin{array}{cccccccccccc}
{\bf I}  & & Y      &   	                 &            \\
           &&\xi_0     & 		        &                                         \\
           &&              &                        &            \\
{\bf II}  &  &           &E_0\subseteq Y         &              \\
            &    &         & y_0\in E_0    &         
 \end{array}
}
$$
Then the rank of $(y_0)\in T((z_n),Y,K)$ is some ordinal $\xi_1<\xi_0$, so in $G^\alpha_X$, I continues by playing $Y,\xi_1$ and II responds according to his strategy
$$
{\footnotesize 
\begin{array}{cccccccccccc}
{\bf I}  & & Y      &   	                 &  Y          \\
           &&\xi_0     &                 &  \xi_1                                       \\
           &&              &                        &            &                  \\
{\bf II}  &  &           &E_0\subseteq Y         &        & E_1\subseteq Y      \\
            &    &         & y_0\in E_0    &                      & y_1\in E_0+E_1
 \end{array}
}
$$
Again,  the rank of $(y_0,y_1)\in T((x_n),Y,K)$ is some ordinal $\xi_2<\xi_1$, so in $G^\alpha_X$, I continues by playing $Y,\xi_2$ and II responds according to his strategy
$$
{\footnotesize 
\begin{array}{cccccccccccc}
{\bf I}  & & Y      &   	                 &  Y         && Y \\
           &&\xi_0     &                 &  \xi_1        && \xi_2                               \\
           &&              &                        &            &                  \\
{\bf II}  &  &           &E_0\subseteq Y         &        & E_1\subseteq Y  &&E_2\subseteq Y    \\
            &    &         & y_0\in E_0    &                      & y_1\in E_0+E_1&& y_2\in E_0+E_1+E_2
 \end{array}
}
$$
Etc.

Eventually, we will have constructed some $(y_0,y_1,\ldots,y_{k-1})$ whose $T((z_n),Y,K)$-rank  is $\xi_{k}=0$, while 
$$
{\footnotesize 
\begin{array}{cccccccccccc}
{\bf I}   & Y      &   	                 &         &               &&Y \\
           &\xi_0    &                 &        &  && \xi_{k-1}                               \\
           &              &                        &             &\ldots                \\
{\bf II}   &           &E_0\subseteq Y         &        &         &&&E_{k-1}\subseteq Y    \\
            &         & y_0\in E_0    &                      & &              &&y_{k-1}\in E_0+\ldots+E_{k-1}
 \end{array}
}
$$
has been played according to the strategy of II.

It follows that if I continues the game by playing $Y,\xi_k=0$, 
$$
{\footnotesize 
\begin{array}{cccccccccccc}
{\bf I}   & Y      &   	                 &         &               &&Y&&Y \\
           &\xi_0    &                 &        &  && \xi_{k-1}                  &&\xi_k=0             \\
           &              &                        &             &\ldots               && \\
{\bf II}   &           &E_0\subseteq Y         &        &         &&&E_{k-1}\subseteq Y  &&  \\
            &         & y_0\in E_0    &                      & &              &&y_{k-1}\in E_0+\ldots+E_{k-1}&&
 \end{array}
}
$$
using his strategy, II must be able to respond with some $E_k$ and $y_k\in E_0+\ldots+E_k$
$$
{\footnotesize 
\begin{array}{cccccccccccc}
{\bf I}   & Y      &   	                                         &&Y&&Y \\
           &\xi_0    &                 &          & \xi_{k-1}                  &&\xi_k=0             \\
           &              &                                     &\ldots               & \\
{\bf II}   &           &E_0\subseteq Y                 &         &&E_{k-1}\subseteq Y  && E_{k}\subseteq Y  \\
            &         & y_0\in E_0                          & &              &y_{k-1}\in E_0+\ldots+E_{k-1}&&y_{k}\in E_0+\ldots+E_{k}
 \end{array}
}
$$
Since II played according to his strategy, we have $(y_0,y_1,\ldots, y_{k})\sim_K(z_0,z_1,\ldots,z_k)$ and thus  $(y_0,y_1,\ldots, y_{k})\in T\big((z_n),Y,K\big)$, contradicting that 
$(y_0,\ldots,y_{k-1})$ has $T\big((z_n),Y,K\big)$-rank $0$ and hence is a terminal node.
\end{proof}

\begin{lemme}\label{tight}
Suppose $(x_n)\subseteq W$ is a block sequence, $\beta<\om_1$,  and that for every normalised block sequence $(y_n)$ in $X=[x_n]$ and $K\geqslant 1$, I has a strategy in $F^{\beta}_X$ to play $(z_0,z_1,\ldots,z_k)$ such that
$$
(z_0,z_1,\ldots,z_k)\not\sim_K(y_0,y_1,\ldots,y_k).
$$
Then, for every normalised block sequence $(y_n)$ in $X$ and  $K\geqslant 1$, there is a sequence $(J_m)$ of intervals of $\N$ with $\min J_m\til \infty$, such that if $A\subseteq \N$ is infinite, contains $0$ and $Z=[x_j\del j\notin \bigcup_{m\in A}J_m]$, then 
$$
{\rm rank}\big(T((y_n),Z,K)\big)\leqslant \beta.
$$ 
\end{lemme}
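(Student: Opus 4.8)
The plan is to argue by contradiction, converting a failure of the conclusion into a refutation of the hypothesis through the asymptotic game. Fix the normalised block sequence $(y_n)$ and the constant $K$, and let $\sigma$ be the strategy witnessing the hypothesis, i.e. a strategy for I in $F^\beta_X$ forcing the outcome $(z_0,\ldots,z_k)$ to satisfy $(z_0,\ldots,z_k)\not\sim_K(y_0,\ldots,y_k)$. Suppose toward a contradiction that no sequence of intervals works; unravelling the quantifiers, this means that for every sequence $(J_m)$ of intervals with $\min J_m\to\infty$ there is an infinite $A\ni 0$ such that, writing $Z_A=[x_j\del j\notin\bigcup_{m\in A}J_m]$, we have ${\rm rank}\big(T((y_n),Z_A,K)\big)>\beta$. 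The aim is to manufacture from this a strategy for II in $F^\beta_X$ whose outcome is always $\sim_K(y_n)$; playing that strategy against $\sigma$ then produces a single outcome that is simultaneously $\sim_K$ and $\not\sim_K$ to $(y_n)$, the desired contradiction. Note that the constant is the same $K$ on both sides, so no loss is incurred in matching them.

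The engine for producing II's strategy is the rank-chasing device of Lemma \ref{loose}, read in the opposite direction. Given a block subspace $Z\subseteq X$ with ${\rm rank}\big(T((y_n),Z,K)\big)>\beta$, II maintains while playing $F^\beta_X$ a node $(w_0,\ldots,w_{l-1})$ of $T((y_n),Z,K)$ together with the invariant that its rank dominates the ordinal $\xi_l$ most recently played by I. When I answers $\xi_l<\xi_{l-1}$, the domination produces a child of rank $\ge\xi_l$; II plays it as its vector $x_l$ and banks the newly used basis vectors into $F_l$. Since I's ordinals strictly descend, the run terminates at $\xi_k=0$ with a genuine node, i.e. with outcome $\sim_K(y_0,\ldots,y_k)$. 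The one point that is not automatic is legality against I's support demand $n_l<F_l$: II must choose the high-rank child so that its \emph{new} basis vectors lie beyond $n_l$, the already-used vectors being harmlessly absorbed into $F_0+\ldots+F_{l-1}$. Thus what II really needs is a tree that is \emph{spread}: at every node of rank $>\xi$ and for every threshold $N$ there is a child of rank $\ge\xi$ whose new support exceeds $N$. (A direct approach that tries to pre-choose intervals absorbing I's demands for \emph{all} nodes fails, since $\sigma$'s demand after a first move $w_0$ is unbounded as $w_0$ ranges over the infinitely many vectors of $Z$ of a given support, so one cannot realise every node; the freedom to select high-support children is what rescues the argument.)

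Consequently the main obstacle, and the real content of the lemma, is to extract from the negated conclusion a single block subspace $Z^*\subseteq X$, spanned by a subsequence of $(x_j)$, whose embeddability tree $T((y_n),Z^*,K)$ simultaneously has rank $>\beta$ and is spread. I plan to build $Z^*$ by a fusion that repeatedly feeds the negated conclusion interval sequences engineered to drive supports to infinity: because $0\in A$ always deletes the initial block $J_0$ and because $\min J_m\to\infty$, one can force the surviving space $Z_A$ to begin arbitrarily far out while retaining ${\rm rank}>\beta$, which is precisely the tail information needed for spreading. Enumerating the countably many finite configurations in $W$ and diagonalising over them, one interleaves these invocations so that every tail of $Z^*$, and hence every node of its tree, inherits rank $>\beta$ with high-support children available on demand. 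The delicate step will be guaranteeing that rank $>\beta$ is preserved at each stage of the fusion and at internal nodes, not merely at the root, since passing to a tail can only shrink the tree; this is where the freedom in choosing $A$ for each prescribed interval sequence must be spent carefully. Once such a $Z^*$ is in hand, the rank-chasing of the preceding paragraph runs without obstruction and yields the contradiction.
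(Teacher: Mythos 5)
There is a genuine gap at the step you yourself flag as ``the main obstacle'': extracting from the negated conclusion a single subspace $Z^*$ whose tree $T((y_n),Z^*,K)$ is simultaneously of rank $>\beta$ and \emph{spread}. The negation only asserts that for each interval sequence $(J_m)$ \emph{some} infinite $A\ni 0$ yields ${\rm rank}\big(T((y_n),Z_A,K)\big)>\beta$; you control neither which $A$ is returned nor anything about the internal nodes of the resulting tree. Concretely, the only uniform consequence one can extract is that for every $N$ there is a subsequence-spanned $Z_N\subseteq [x_j]_{j>N}$ with root rank $>\beta$ (take $J_0=[0,N]$ and the remaining $J_m$ far out). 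This is information about roots of trees over \emph{varying} subspaces, whereas spreading is a statement about \emph{internal} nodes of the tree over one fixed subspace: given a partial copy $(w_0,\ldots,w_{l-1})$ of rank $>\xi$, you need a continuation of rank $\geqslant\xi$ whose new support lies beyond an arbitrary threshold, and knowing that some far-out $Z_N$ carries a fresh high-rank copy of $(y_n)$ started from scratch says nothing about extending that particular prefix. Nor can the invocations be nested into a fusion: for a given $(J_m)$, the intersection over all admissible $A$ of $\bigcup_{m\in A}J_m$ is just $J_0$, so successive applications of the negated conclusion need not produce decreasing subspaces. I do not see how to complete the argument along these lines.

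The irony is that the ``direct approach'' you dismiss in your parenthetical is essentially the paper's proof, and the obstacle you cite (infinitely many vectors of a given support, hence an unbounded supremum of I's integer demands) is circumvented by discretisation. One fixes sets $\D_i$ that are $\delta_i$-nets of the $K$-balls of the spaces $[x_l]_{l\in d}$ with only \emph{finitely many} net points per support $d$; then the quantity $p(m)$, the maximum of I's integer demands over all positions built from net vectors supported in $[0,m-1]$, is a maximum over a finite set and hence well defined, and one sets $J_m=[m,p(m)]$ outright --- no contradiction with the conclusion is needed to produce the intervals. The rank-chasing you describe is then run, exactly as you envisage, but against this particular $Z$ and the perturbed copy of its tree inside the nets: a node supported in the gaps automatically has its support beyond I's current demand, because that demand was precomputed and swallowed by the removed interval $J_{a_l}$, so the ``spread'' property you were trying to engineer holds for free. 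The perturbation costs a factor of $2$ in the equivalence constant, which is harmless since the hypothesis supplies a strategy for I for every constant, in particular for $2K$.
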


\begin{proof}We relativise the notions of support of vectors et cetera to the basis $(x_n)$ for $X$. So the reader can assume that $(x_n)$ is the original basis $(e_n)$ and $X=W$.

Assume $(y_n)$ is a normalised block sequence in $X$ and $K\geqslant 1$. Let also $\Delta=(\delta_j)$ be a sequence of positive real numbers such that whenever $z_j,v_j\in X$, $\|z_j-v_j\|<\delta_j$, and 
$$
(v_0,\ldots,v_k)\sim_K(y_0,\ldots,y_k),
$$
then 
$$
(z_0,\ldots,z_k)\sim_{2K}(y_0,\ldots,y_k).
$$
We choose sets
$\D_i\subseteq  X$  such that  for each finite set $d\subseteq \N$, the number
of $z\in \D_i$ such that ${\rm supp}(z)=d$ is finite, and for every $v\in 
X$ with $\|v\|\leqslant K$ there is some $z\in \D_i$ with ${\rm supp}(z)={\rm supp}(v)$ and $\norm{z-v}<\delta_i$. This is possible since the $K$-ball in $[x_j]_{j\in d}$
is totally bounded for all finite $d\subseteq\N$.

The strategy for I in $F^\beta_X$ in the game for $(y_n)$ with constant $2K$ can be seen as a pair of functions $\xi$ and $n$ that to each legal position $(z_0,E_0,\ldots,z_j,E_j)$ of II in $F^\beta_X$ provide the next play $\xi(z_0,E_0,\ldots,z_j,E_j)\in {\rm Ord}$ and $n(z_0,E_0,\ldots,z_j,E_j)\in \N$ by I. 

We define a function $p\colon \N\til \N$ by letting $p(m)$ be the maximum of $m$ and 
\begin{align*}
\max\big(n(z_0,[x_l]_{l\in d_0},\ldots,z_i,[x_l]_{l\in d_i})\del d_j\subseteq [0,m-1]\;&\: z_j\in [x_l]_{l\in d_0\cup\ldots\cup d_j}\cap \D_j\big).
\end{align*}
By assumption on the sets $\D_j$, $p$ is well-defined and so we can set $J_m=[m,p(m)]\subseteq \N$.

We claim that if $A\subseteq \N$ is an infinite set containing $0$ and 
$$
Z=[x_n\;|\;n\notin \bigcup_{m\in A}J_m],
$$ 
then
$$
{\rm rank}\big(T((y_n),Z,K)\big)\leqslant \beta.
$$
To see this,  we define a monotone function $\phi$, i.e., $\vec v\prec \vec w\saa \phi(\vec v)\prec \phi(\vec w)$, associating to each  $\vec v=(v_0,v_1,\ldots,v_{i})\in T((y_n),Z,K)$ some 
$$
\phi(\vec v)=(z_0,z_1,\ldots,z_{i})\in \D_0\times \D_1\times\ldots\times \D_{i}
$$ 
such that for all $j\leqslant i$, $\|z_j-v_j\|<\delta_j$ and ${\rm supp}(z_j)={\rm supp}(v_j)$, whereby, in particular, $z_j\in Z$. Also set $T=\phi\big[T((y_n),Z,K)\big]$ and note that $T$ is a subtree of $Z^{<\N}$ with 
$$
{\rm rank}(T)\geqslant {\rm rank}\big(T((y_n),Z,K)\big).
$$

Suppose toward a contradiction that ${\rm rank}(T)>\beta$, whereby the rank of $\tom$ in $T$ is $\geqslant \beta$. We describe how II can play against the strategy for I in $F^\beta_X$ to play $(z_0,\ldots,z_k)$ such that
$$
(z_0,\ldots,z_k)\sim_{2K}(y_0,\ldots,y_k),
$$
which will contradict the assumption on the strategy for I. The case $\beta=0$ is trivial, so we assume that $\beta>0$.

First, I plays $\xi(\tom)<\beta$ and $n(\tom)$. Since, $a_0=0\in A$, we have $n(\tom)\leqslant p(a_0)=\max J_{a_0}<Z$ and thus there is some $n(\tom)<z_0\in T$ whose rank in $T$ is $\geqslant \xi(\tom)$.
Find also $a_1\in A$ such that $z_0<J_{a_1}$ and let $E_0=[x_j\del J_{a_0}<x_j<J_{a_1}]$.
So let II respond by 
$$
{\footnotesize 
\begin{array}{cccccccccccc}
{\bf I}  & & n(\tom)      &   	                 &            \\
           &&\xi(\tom)     & 		        &                                         \\
           &&              &                        &            \\
{\bf II}  &  &           &n(\tom)<E_0         &              \\
            &    &         & z_0\in E_0    &         
 \end{array}
}
$$
Now, by his strategy, I will play some $\xi(z_0,E_0)<\xi(\tom)$ and $n(z_0,E_0)\leqslant p(a_1)=\max J_{a_1}$. So find some $z_1$ such that $(z_0,z_1)\in T$ and has rank $\geqslant \xi(z_0,E_0)$ in $T$. Find also $a_2\in A$ such that $z_1<J_{a_2}$. Then, as $a_0,a_1\in A$, if we set $E_1=[x_j\del J_{a_1}<x_j<J_{a_2}]$, we have $z_1\in E_0+E_1$, so we let II respond by 
$$
{\footnotesize 
\begin{array}{cccccccccccc}
{\bf I}  & & n(\tom)      &   	                 &   n(z_0,E_0)         \\
           &&\xi(\tom)     & 		        &                 \xi(z_0,E_0)                        \\
           &&              &                        &            \\
{\bf II}  &  &           &n(\tom)<E_0         &    &n(z_0,E_0)<E_1          \\
            &    &         & z_0\in E_0    &         &z_1\in E_0+E_1
 \end{array}
}
$$
Et cetera. 
It follows that at the end of the game, 
$$
{\footnotesize 
\begin{array}{cccccccccccc}
{\bf I}   & n(\tom)      &   	&          &   n(z_0,E_0, \ldots, z_{k-1},E_{k-1})         \\
           &\xi(\tom)     & &		        &               \xi(z_0,E_0, \ldots, z_{k-1},E_{k-1})=0                      \\
           &              &          &   \ldots           &  &          \\
{\bf II}   &           &n(\tom)<E_0         &&    &n(z_0,E_0, \ldots, z_{k-1},E_{k-1}) <E_k          \\
               &         & z_0\in E_0    &      &   &z_k\in E_0+\ldots+E_k
 \end{array}
}
$$
II will have constructed a sequence $(z_0,\ldots,z_k)\in T$. So, by the definition of $T$, there is some $(v_0,\ldots,v_k)\in T((y_n),Z,K)$ such that $\phi(v_0,\ldots,v_k)=(z_0,\ldots,z_k)$ and hence $\|z_j-v_j\|<\delta_j$ for all $j$. Thus,
$$
(v_0,\ldots,v_k)\sim_K(y_0,\ldots,y_k), 
$$
and hence
$$
(z_0,\ldots,z_k)\sim_{2K}(y_0,\ldots,y_k).
$$
Since II cannot have such a strategy, it follows instead that 
$$
{\rm rank}\big(T((y_n),Z,K)\big)\leqslant {\rm rank}(T)\leqslant \beta,
$$
which proves the lemma.
\end{proof}

\begin{lemme}\label{tight1}
Suppose $(x_n)\subseteq W$ is a normalised block sequence, $\beta<\om_1$,  and that for every normalised block sequence $(y_n)$ in $X=[x_n]$ and $K\geqslant 1$, I has a strategy in $F^{\beta}_X$ to play $(z_0,z_1,\ldots,z_k)$ such that
$$
(z_0,z_1,\ldots,z_k)\not\sim_K(y_0,y_1,\ldots,y_k).
$$
Then, for every normalised block sequence $(y_n)$ in $X$ there is a sequence 
$$
I_0<I_1<I_2<\ldots
$$ 
of intervals of $\N$, such that if $A\subseteq \N$ is infinite and $Z=[x_j\del j\notin \bigcup_{m\in A}I_m]$, then 
$$
{\rm Emb}\big((y_n),Z\big)\leqslant \beta.
$$ 
\end{lemme}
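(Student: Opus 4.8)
The plan is to bootstrap Lemma \ref{tight}, which controls a single constant $K$ under the restriction $0\in A$, up to the full embeddability index and arbitrary $A$, by running it for every $K$ and amalgamating the resulting interval sequences into one. The two reductions I rely on are monotonicity facts: since $(v_0,\ldots,v_k)\sim_K(y_0,\ldots,y_k)$ implies $(v_0,\ldots,v_k)\sim_{K'}(y_0,\ldots,y_k)$ for $K'\geqslant K$, and $T((y_n),Z',K)\subseteq T((y_n),Z,K)$ whenever $Z'\subseteq Z$, the rank ${\rm rank}\big(T((y_n),Z,K)\big)$ is nondecreasing both in $K$ and under enlarging the subspace. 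As ${\rm Emb}((y_n),Z)=\sup_{K\geqslant 1}{\rm rank}\big(T((y_n),Z,K)\big)$, it therefore suffices to produce a \emph{single} sequence $I_0<I_1<\ldots$ so that, for every infinite $A$ and every $K$, the space $Z=[x_j\mid j\notin\bigcup_{m\in A}I_m]$ satisfies ${\rm rank}\big(T((y_n),Z,K)\big)\leqslant\beta$; taking the supremum over $K$ then gives ${\rm Emb}((y_n),Z)\leqslant\beta$. Note also that we may assume $\beta\geqslant\om$, since every infinite-dimensional subspace already has embeddability index at least $\om$, so the hypothesis is unattainable (and the conclusion vacuous) for $\beta<\om$.

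First I would fix the normalised block sequence $(y_n)$ and apply Lemma \ref{tight} once for each $K\geqslant 1$, obtaining a sequence of intervals $(J^K_m)_m$ with $\min J^K_m\to\infty$ such that removing $\bigcup_{m\in B}J^K_m$ for any infinite $B\ni 0$ yields a space of $K$-rank $\leqslant\beta$; in particular the tail $[x_j\mid j>\max J^K_0]$ and all of its further sub-removals have $K$-rank $\leqslant\beta$. Next I would merge these countably many families diagonally: build $I_0<I_1<\ldots$ so that each $I_m$ entirely swallows one fresh interval $J^K_{j(m,K)}$ for every $K\leqslant m$, which is possible precisely because $\min J^K_j\to\infty$ lets me find such intervals arbitrarily far out, with the indices $j(m,K)$ strictly increasing in $m$. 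Then, for any infinite $A$ and any fixed $K$, the set $\{m\in A\mid m\geqslant K\}$ is infinite, so $\bigcup_{m\in A}I_m$ engulfs infinitely many of the $J^K$-intervals; writing $B_K$ for the corresponding infinite index set, the surviving space $Z$ is contained in $[x_j\mid j\notin\bigcup_{m'\in B_K}J^K_{m'}]$, and by monotonicity ${\rm rank}\big(T((y_n),Z,K)\big)\leqslant{\rm rank}\big(T((y_n),[x_j\mid j\notin\bigcup_{m'\in B_K}J^K_{m'}],K)\big)$.

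The hard part will be the initial segment, which is exactly what the hypothesis $0\in A$ buys in Lemma \ref{tight}: it wipes out $J^K_0=[0,n^K(\tom)]$ and so forces every surviving vector past I's first move $n^K(\tom)$. For an arbitrary $A$ the merged removal need not contain $J^K_0$ (for $A=\{N,N+1,\ldots\}$ the whole initial block survives), so $B_K$ typically omits $0$ and $Z$ retains a finite-dimensional chunk $F_0$ of low coordinates. I would therefore split $Z=F_0\oplus Z^-$, where $Z^-=[x_j\mid j\notin\bigcup_{m\in A}I_m,\ j>\max J^K_0]$ is a tail for which Lemma \ref{tight}, applied with the index set $B_K\cup\{0\}\ni 0$, gives ${\rm rank}\big(T((y_n),Z^-,K)\big)\leqslant\beta$, and $F_0$ is finite-dimensional. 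The crux is then a finite-dimensional stability estimate: because any sequence that is $K$-equivalent to a basis is linearly independent, a branch of $T((y_n),Z,K)$ can draw on $F_0$ in at most $\dim F_0$ of its coordinates, so the rank of $T((y_n),Z,K)$ exceeds that of $T((y_n),Z^-,K')$ by only a finite increment; since $\beta\geqslant\om$, this increment is absorbed and ${\rm rank}\big(T((y_n),Z,K)\big)\leqslant\beta$. Carrying this out for every $K$ and taking the supremum then yields ${\rm Emb}((y_n),Z)\leqslant\beta$ for all infinite $A$. Making the stability estimate precise, and handling the exact bookkeeping of the diagonal merge and the bracketing of $F_0$ against the removed intervals, is where the real work lies.
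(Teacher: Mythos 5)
Your overall skeleton matches the paper's: fix $(y_n)$, invoke Lemma \ref{tight} once for each constant $K$ to get families $(J^K_m)$, and merge them diagonally into a single sequence $(I_m)$ so that each $I_m$ swallows a fresh $J^K$-interval for every $K\leqslant m$; the reduction to bounding ${\rm rank}\big(T((y_n),Z,K)\big)$ for each $K$ separately is also exactly what the paper does. The problem is the step you yourself flag as ``where the real work lies'': removing the restriction $0\in A$. Your proposed finite-dimensional stability estimate is both unproven and incorrectly motivated. The assertion that a branch of $T((y_n),Z,K)$ ``can draw on $F_0$ in at most $\dim F_0$ of its coordinates'' is false as stated: writing $Z=F_0\oplus Z^-$, every node $(v_0,\ldots,v_k)$ can have $v_i=f_i+w_i$ with \emph{all} $f_i\neq 0$; linear independence of the $v_i$ gives no bound on how many coordinates meet $F_0$. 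Projecting onto $Z^-$ along $F_0$ does not send nodes of $T((y_n),Z,K)$ to nodes of $T((y_n),Z^-,K')$ either, since some $w_i$ may be small or vanish, and controlling how many can be small (and salvaging a comparison of ranks up to a finite increment) is a genuinely delicate lemma about Bourgain indices that you do not supply. Your auxiliary claim that the hypothesis is vacuous for $\beta<\om$ is also not obvious and is not needed.

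The paper sidesteps all of this with a different device: rather than splitting off $F_0$ and comparing trees, it \emph{replaces} the finitely many surviving low basis vectors $x_j$, $j<\min I_a$, by basis vectors sitting in the gap $(\max J^{a\cdot c(\min I_a)}_0,\max I_a]$, where $a\in A$ is chosen with $a\geqslant N$. Since the two subsequences of $(x_n)$ differ in at most $\min I_a$ terms, they are $c(\min I_a)$-equivalent, so $Z\sqsubseteq_{c(\min I_a)}Y$ for a space $Y$ that genuinely avoids $J^{K'}_0$ and infinitely many further $J^{K'}$-intervals with $K'=a\cdot c(\min I_a)$; the constant degrades from $N$ to $a\cdot c(\min I_a)$, which is harmless because Lemma \ref{tight} was applied for every constant. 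Making room for this replacement is exactly why the paper builds the extra condition $\min I_n<\max I_n-\max J^{n\cdot c(\min I_n)}_0$ into the construction of the $I_n$, a condition your diagonal merge omits. Without either this re-siting trick or an honest proof of a finite-dimensional perturbation lemma for the index, your argument does not close.
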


\begin{proof}Fix a normalised block sequence $(y_n)$ in $X$ and relativise again all notions of support et cetera to the block basis $(x_n)$.
By Lemma \ref{tight}, we can for every $K$ find a sequence $(J_n^K)$ of intervals of $\N$ with $\min J_n^K\Lim{n\til \infty} \infty$ such that for any infinite set $A\subseteq \N$ containing $0$, we have
$$
{\rm rank}\big(T((y_n),[x_j\del j\notin\bigcup_{n\in A}J^K_n],K)\big)\leqslant \beta.
$$ 
Also, for every $N$, we let $c(N)\in \N$ be a constant such that any two subsequences of $(x_j)$ differing in at most $N$ terms are $c(N)$-equivalent. 

We construct intervals $I_0<I_1<I_2<\ldots$ such that each $I_n$ contains an interval from each of the families $(J^1_i),\ldots,(J^n_i)$ and, moreover,  
$$
\min I_n<\max I_n-\max J^{n\cdot c(\min I_n)}_0.
$$

We claim that  if $A\subseteq \N$ is infinite and $Z=[x_j\del j\notin \bigcup_{m\in A}I_m]$, then 
$$
{\rm Emb}\big((y_n),Z\big)\leqslant \beta.
$$ 
Suppose towards a contradiction that this fails for some $A$ and pick some $N$ such that ${\rm rank}\big(T((y_n),Z,N)\big)>\beta$. Choose $a\in A$ such that $a\geqslant N$ and note that
$$
\min I_a<\max I_a-\max J^{a\cdot c(\min I_a)}_0.
$$
Thus, by changing only the terms $x_j$ for $j<\min I_a$ of the sequence 
\[\begin{split}
(x_j\del j\notin \bigcup_{m\in A}&I_m)=\\
& (x_j\del j\notin \bigcup_{m\in A}I_m\;\&\; j<\min I_a)\cup (x_j\del j\notin \bigcup_{m\in A}I_m\;\&\; j>\max I_a),
\end{split}\]
we find a  subsequence of 
\[\begin{split}
(x_j\del \max J_0^{a\cdot c(\min I_a)}<j\leqslant \max I_a)\cup 
(x_j\del j\notin \bigcup_{m\in A}I_m\;\&\; j>\max I_a)\end{split}\]
that is $c(\min I_a)$-equivalent with
$$
(x_j\del j\notin \bigcup_{m\in A}I_m).
$$
Since $N\cdot c(\min I_a)\leqslant a\cdot c(\min I_a)$, it follows that if
$$
Y= [x_j\del \max J_0^{a\cdot c(\min I_a)}<j\leqslant \max I_a]+[x_j\del j\notin \bigcup_{m\in A}I_m\;\&\; j>\max I_a],
$$
then $Z\sqsubseteq_{c(\min I_a)}Y$, and so
$$
\beta<{\rm rank}\big(T((y_n),Z,N)\big)\leqslant {\rm rank}\big(T((y_n),Y,a\cdot c(\min I_a))\big).
$$ 
But, by the choice of the $I_n$, we see that there is an infinite subset $B\subseteq \N$ containing $0$ such that $Y$ is outright a subspace of $[x_j\del j\notin \bigcup_{m\in B}J^{a\cdot c(\min I_a)}_m]$, whereby, by choice of the intervals $J^{a\cdot c(\min I_a)}_m$, we have
$$
{\rm rank}\big(T((y_n),Y,a\cdot c(\min I_a))\big)\leqslant  \beta,
$$
which is absurd.
This contradiction shows that the intervals $I_n$ fulfill the conclusion of the lemma.
\end{proof}

By combining Theorem \ref{main1} and Lemmas \ref{loose} and \ref{tight1}, we obtain
\begin{thm}\label{main3}
Suppose $\alpha<\om_1$. Then there is a block subspace  $X=[x_n]\subseteq W$ such that one of the following holds
\begin{enumerate}
\item For every normalised block sequence $(y_n)$ in $X$ there is a sequence 
$$
I_0<I_1<I_2<\ldots
$$ 
of intervals of $\N$, such that if $A\subseteq \N$ is infinite, then 
$$
{\rm Emb}\big((y_n),[x_j\del j\notin \bigcup_{m\in A}I_m]\big)\leqslant \om\alpha.
$$ 
\item For any subspace $Y\subseteq X$ and any block sequence $(z_n)\subseteq X$,
$$
{\rm Emb}\big((z_n),Y\big)> \alpha.
$$
\end{enumerate}
\end{thm}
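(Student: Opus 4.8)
The plan is to read off the two alternatives of Theorem~\ref{main3} directly from the game-theoretic dichotomy of Theorem~\ref{main1}, translating each alternative out of the language of the asymptotic and Gowers games into the language of the embeddability index by means of Lemmas~\ref{tight1} and~\ref{loose}. So first I would apply Theorem~\ref{main1} to the fixed ordinal $\alpha$ to obtain a block subspace $X\subseteq W$ for which one of its two alternatives holds, and then handle the two cases separately, each collapsing onto the hypothesis of one of the two lemmas.

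Suppose alternative~(1) of Theorem~\ref{main1} holds for $X$, i.e. for every block sequence $(y_n)$ in $X$ and every $K\geqslant 1$, player I has a strategy in $F^{\om\alpha}_X$ to play $(x_0,\ldots,x_k)\not\sim_K(y_0,\ldots,y_k)$. Since the game $F^{\om\alpha}_X$ depends only on the subspace $X$ and not on a choice of basis, I would replace the basis of $X$ by its normalisation and assume $X=[x_n]$ with $(x_n)$ a normalised block sequence, without disturbing the hypothesis, which quantifies over all block sequences and in particular all normalised ones. This is precisely the hypothesis of Lemma~\ref{tight1} taken with $\beta=\om\alpha$, and the conclusion of that lemma is exactly alternative~(1) of the present theorem, with the intervals $I_0<I_1<\ldots$ and the spaces $[x_j\del j\notin\bigcup_{m\in A}I_m]$ expressed relative to the same normalised basis $(x_n)$.

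Suppose instead that alternative~(2) of Theorem~\ref{main1} holds, witnessed by some constant $K\geqslant 1$: for every block sequence $(z_n)\subseteq X$, player II has a strategy in $G^\alpha_X$ to play $(y_0,\ldots,y_k)\sim_K(z_0,\ldots,z_k)$. Fixing an arbitrary block sequence $(z_n)\subseteq X$, this is exactly the hypothesis of Lemma~\ref{loose}, whose conclusion gives ${\rm rank}\big(T((z_n),Y,K)\big)>\alpha$ for every subspace $Y\subseteq X$. Since ${\rm Emb}((z_n),Y)=\sup_{L\geqslant 1}{\rm rank}\big(T((z_n),Y,L)\big)\geqslant {\rm rank}\big(T((z_n),Y,K)\big)$, I obtain ${\rm Emb}((z_n),Y)>\alpha$ for all such $Y$ and all block sequences $(z_n)\subseteq X$, which is alternative~(2).

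This final step is essentially a matter of assembling the three preceding results, so I do not expect a genuine obstacle here; the substantive work is already contained in Theorem~\ref{main1} and in Lemmas~\ref{tight1} and~\ref{loose}. The one point requiring care is the bookkeeping around normalisation in the first case: I must verify that passing to a normalised basis of $X$ leaves both the game $F^{\om\alpha}_X$ and the collection of block sequences in $X$ unchanged, so that the hypothesis of Theorem~\ref{main1}(1) transfers cleanly to the normalised hypothesis demanded by Lemma~\ref{tight1}. I should also be mindful that the ordinal bounds line up, namely that the game index $\om\alpha$ appearing in Theorem~\ref{main1}(1) is exactly the value $\beta=\om\alpha$ fed into Lemma~\ref{tight1}, matching the bound ${\rm Emb}\leqslant\om\alpha$ in alternative~(1).
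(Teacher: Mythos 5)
Your proposal is correct and follows exactly the paper's own route: the paper likewise deduces Theorem~\ref{main3} by applying Theorem~\ref{main1} and then feeding alternative~(1) into Lemma~\ref{tight1} with $\beta=\om\alpha$ and alternative~(2) into Lemma~\ref{loose}. The normalisation bookkeeping you flag is harmless (the Skolem-hull construction of $\go F$ ensures normalised vectors stay in $W$, and the game $F^{\om\alpha}_X$ depends only on the subspace $X$), and the paper does not even comment on it.
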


And by replacing the normed $\go F$-vector subspaces $X$ and $Y$ in Theorem \ref{main3} by their closures $\ku X$ and $\ku Y$ in $\ku W$, we obtain
Theorem \ref{main5}.
\begin{thm}
Let $\ku W$ be Banach space with a Schauder basis and suppose $\alpha<\om_1$. 
Then there is a block subspace  $\ku X=[x_n]\subseteq \ku W$ that is either $\om\alpha$-tight or $(\alpha+1)$-minimal.
\end{thm}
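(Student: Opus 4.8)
The plan is to feed the given ordinal $\alpha$ into Theorem \ref{main3}, applied to the countable-dimensional $\go F$-vector space $W$ attached to $\ku W$, obtaining a block subspace $X=[x_n]\subseteq W$ for which one of the two alternatives there holds; the space delivered by Theorem \ref{main5} will then be the closed linear span $\ku X=\overline{[x_n]}\subseteq \ku W$. The whole passage from the discrete setting to the Banach setting rests on the identity ${\rm Emb}((x_n),\ku Y)={\rm Emb}((x_n),Y)$ recorded earlier, which says that the Bourgain index does not see the difference between an $\go F$-subspace $Y$ and its closure $\ku Y$. With this in hand, the two alternatives of Theorem \ref{main3} are, after matching up quantifiers, verbatim the definitions of $\om\alpha$-tightness and $(\alpha+1)$-minimality: recall that ${\rm Emb}((z_n),Y)>\alpha$ is exactly the statement that $(z_n)$ $(\alpha+1)$-embeds into $Y$, so alternative (2) is $(\alpha+1)$-minimality and alternative (1) is $\om\alpha$-tightness.

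First I would treat alternative (2). Given a block subspace $\ku Z=[z_n]\subseteq \ku X$ and an arbitrary infinite-dimensional subspace $\ku Y\subseteq \ku X$, I must show ${\rm Emb}((z_n),\ku Y)\geqslant \alpha+1$. Being infinite-dimensional, $\ku Y$ contains, up to an arbitrarily small perturbation, a normalised block sequence of $(x_n)$; perturbing its entries to have coefficients in $\go F$ produces an $\go F$-block subspace $Y'\subseteq X$ whose closure $\ku{Y'}$ embeds into $\ku Y$ with constant close to $1$. Since ${\rm Emb}$ is monotone in its target, ${\rm Emb}((z_n),\ku Y)\geqslant {\rm Emb}((z_n),\ku{Y'})$. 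Likewise perturbing $(z_n)$ to an $\go F$-block sequence in $X$, which alters the index only through a harmless replacement of the constant $K$ by $K+1$, I may assume $(z_n)\subseteq X$. Now Theorem \ref{main3}(2) yields ${\rm Emb}((z_n),Y')>\alpha$, and the bridging identity upgrades this to ${\rm Emb}((z_n),\ku{Y'})>\alpha$, whence ${\rm Emb}((z_n),\ku Y)\geqslant \alpha+1$, as required.

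Alternative (1) is symmetric. Given a block basis $(y_n)$ of $\ku X$, I perturb and normalise it to a normalised $\go F$-block sequence in $X$, apply Theorem \ref{main3}(1) to obtain intervals $I_0<I_1<I_2<\ldots$, and read the resulting bound ${\rm Emb}((y_n),[x_j\del j\notin \bigcup_{m\in A}I_m])\leqslant \om\alpha$ in $\ku X$ via the bridging identity, interpreting $[x_j\del j\notin \bigcup_{m\in A}I_m]$ as a block subspace of $\ku X$ spanned by the basis $(x_n)$. Transferring the bound back from the $\go F$-perturbation to the original $(y_n)$ again costs only an increment $K\mapsto K+1$ of the constant, invisible to ${\rm Emb}$ because of the supremum over $K$, so ${\rm Emb}((y_n),[x_j\del j\notin \bigcup_{m\in A}I_m])\leqslant \om\alpha$ for every infinite $A$. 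This is precisely $\om\alpha$-tightness of $\ku X$.

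I expect the only genuinely delicate point to be the bookkeeping in these reductions between the countable $\go F$-structure and the ambient Banach space. One must verify that perturbing a domain sequence to have $\go F$-coefficients, and replacing an arbitrary target $\ku Y$ by the closure of an $\go F$-subspace sitting inside it, perturbs the embeddability index only through the innocuous substitution of $K$ by $K+1$ — absorbed by the supremum over $K$ in the definition of ${\rm Emb}$ — and, crucially, that the monotonicity ${\rm Emb}((z_n),\ku{Y'})\leqslant {\rm Emb}((z_n),\ku Y)$ runs in the direction demanded by the minimality conclusion. Once these routine perturbation and density reductions are secured, no idea beyond Theorem \ref{main3} and the identity ${\rm Emb}((x_n),\ku Y)={\rm Emb}((x_n),Y)$ is needed.
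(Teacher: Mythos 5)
Your proposal is correct and follows exactly the paper's route: the paper derives Theorem \ref{main5} from Theorem \ref{main3} in a single sentence ("replace the normed $\go F$-vector subspaces by their closures"), relying implicitly on the identity ${\rm Emb}((x_n),\ku Y)={\rm Emb}((x_n),Y)$ and the standard small-perturbation facts that you spell out. Your write-up simply makes explicit the bookkeeping the paper leaves to the reader.
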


\end{document}